\newtheorem{thm}{Theorem}[section]
\newtheorem{lem}[thm]{Lemma}
\newtheorem{clem}[thm]{Computer Lemma}
\newtheorem{cor}[thm]{Corollary}
\newtheorem{ccor}[thm]{Computer Corollary}
\newtheorem{conj}[thm]{Conjecture}
\def\<{\langle}
\def\>{\rangle}
\def\onto{\twoheadrightarrow}
\def\Z{\mathbb{Z}}
\def\C{\mathbb{C}}
\def\P{PSL(2,\C)}
\def\G{\Gamma}
\def\D{\Delta}
\def\wh{\widehat}
\def\ol{\overline}
\def\V{\mathcal{V}}
\def\X{\mathcal{X}}
\def\a{\alpha}
\newenvironment{spf}{\par\medskip\noindent{\em Sketch Proof.}}{\hfill$\square$\par\medskip}
\begin{document}

\title{Generalised Triangle Groups of Type  $(2,4,2)$}

	\author[Howie]{James Howie }
	\address{ James Howie\\
		Department of Mathematics and Maxwell Institute for Mathematical Sciences\\
		Heriot--Watt University\\
		Edinburgh EH14 4AS }
\email{J.Howie@hw.ac.uk, j53howie@gmail.com}
\keywords{Generalised triangle group, Rosenberger Conjecture}
\subjclass[2020]{Primary 20F05, Secondary 20E05, 20C99, 20-08}

\maketitle

\begin{abstract}A conjecture of Rosenberger says that a group of the form $\<x,y|x^p=y^q=W(x,y)^r=1\>$ (with $r>1$) is either virtually solvable or contains a non-abelian free subgroup.  This note is an account of an attack on the conjecture in the  case $(p,q,r)=(2,4,2)$.  The results obtained are only partial, but nevertheless provide strong evidence in support of the conjecture in the case in question, in that the word $W$ in any counterexample is shown to satisfy some strong restrictions.  The exponent-sums of $x$ and $y$ in $W$ must be even and  odd respectively, while its free-product (or syllable) length must be at least $68$.  There is also a report of computer investigations which yield a stronger lower bound of $196$ for the free-product length.
\end{abstract}

\section{Introduction}
By a {\em triangle group} we mean the group of isometries of the elliptic, euclidean or hyperbolic plane generated by rotations around the vertices of a triangle whose angles are submultiples $\pi/p,\pi/q,\pi/r$ of $\pi$ (that is, $p,q,r$ are all integers greater than $1$).
It is well known that such a group is linear, and hence satisfies the Tits alternative: either the group is virtually solvable, or it contains a non-abelian free subgroup.  It is also well-known that any triangle group has a presentation of the form
$$\<x,y|x^p=y^q=(xy)^r=1\>.$$
Rosenberger \cite{Ros} defined a {\em generalised triangle group} to be one with a presentation of the form
$$\<x,y|x^p=y^q=W(x,y)^r=1\>,$$
where again $p,q,r$ are integers greater than $1$, and the word $W$ represents a cyclically reduced element of free product length $2k\ge 2$ in the group 
$$\<x,y|x^p=y^q=1\>\cong\Z_p\ast\Z_q.$$
Rosenberger \cite{Ros} conjectured that these groups also satisfy a Tits alternative:

\begin{conj}[\cite{Ros}]\label{Rconj}
If $G$ is a generalised triangle group, then either $G$ is virtually solvable or $G$ contains a non-abelian free subgroup.
\end{conj}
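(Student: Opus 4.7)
The plan is to attack the conjecture via the essential-representation machinery of Baumslag--Morgan--Shalen, Rosenberger, Fine and others. An \emph{essential representation} of $G=\<x,y\mid x^p=y^q=W^r=1\>$ is a homomorphism $\rho\colon G\to\P$ such that $\rho(x),\rho(y),\rho(W)$ have orders exactly $p,q,r$ respectively. If such a $\rho$ has non-elementary image, then $\rho(G)$ is a finitely generated non-elementary linear group; by Tits' alternative it contains a non-abelian free subgroup $F$, and pulling a free basis of $F$ back through $\rho$ produces a non-abelian free subgroup of $G$. Thus Conjecture \ref{Rconj} reduces to two sub-problems: (i)~produce an essential representation, and (ii)~arrange its image to be non-elementary, or else prove $G$ virtually solvable directly.

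For step (i) I would construct a one-parameter family of representations of $\Z_p\ast\Z_q$ into $\P$. Fix matrices $X,Y\in\P$ of orders $p,q$ whose axes meet at a variable angle; after conjugation the only free parameter may be taken to be $\lambda=\operatorname{tr}(XY)$. A standard Fricke-type calculation shows that $\operatorname{tr}(\rho_\lambda(W))$ is a polynomial $\tau_W(\lambda)$ of degree equal to the syllable half-length $k\ge 1$ of $W$. Solving $\tau_W(\lambda)=2\cos(\pi/r)$ over $\C$ yields at least one root $\lambda_0$, at which $\rho_{\lambda_0}(W)$ is a rotation through a rational multiple of $\pi/r$; a divisibility check on the precise order then promotes $\rho_{\lambda_0}$ to a homomorphism of $G$ itself.

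The main obstacle is step (ii): controlling the image of $\rho_{\lambda_0}$ in $\P$. Elementary subgroups of $\P$ either fix a point, fix a pair of points, or are among the finite rotation groups $A_4,S_4,A_5$ and the dihedral and cyclic groups, and in each case the trace parameter $\lambda_0$ is confined to a finite algebraic set depending only on $p,q,r$. When $k$ is large relative to $p,q,r$ one can hope to pigeon-hole the $k$ roots of $\tau_W-2\cos(\pi/r)$ against this finite list and conclude that at least one root yields non-elementary image; Tits then applies. The hard case is the \emph{exceptional} one, in which \emph{every} root produces elementary image. The generalised triangle groups $G$ arising from such exceptional $W$ are the only remaining candidate counterexamples, and for each of them one must show directly that $G$ is virtually solvable — typically by identifying $G$ with a quotient of a soluble-by-finite extension through explicit use of the elementary image together with combinatorial properties of $W$.

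This last step is the principal difficulty and is, in full generality, beyond present technology: the exceptional configurations depend delicately on $(p,q,r)$ and on the syllable pattern of $W$, and the case analysis is not uniform. The present paper therefore does not resolve Conjecture \ref{Rconj} outright, but carries out a detailed implementation of the programme above in the specific triple $(p,q,r)=(2,4,2)$, obtaining the parity restrictions on exponent-sums and the syllable-length bounds announced in the abstract as partial progress towards step (ii) in that case.
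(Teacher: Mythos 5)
You have been asked to prove a statement that the paper itself does not prove: Conjecture \ref{Rconj} is Rosenberger's open conjecture, and the paper states explicitly that it ``in full generality remains an open problem,'' offering only partial results in the single type $(2,4,2)$. Your text, by its own admission (``beyond present technology''), is likewise not a proof but a description of the standard programme, so there is a genuine and unavoidable gap: the exceptional case you defer --- where \emph{every} root of $\tau_W(\lambda) - 2\cos(\pi/r)$ yields image of one of Rosenberger's types (a), (b), (c) --- is precisely where all of the content of the conjecture lives. The steps you do carry out are essentially correct and do match the paper's starting point: essential representations exist because $\tau_W$ is nonconstant of degree $k$; a trace value $2\cos(\pi/r)$ forces $\rho(W)$ to have order exactly $r$ in $\P$; a subgroup of $G$ surjecting onto a rank-$2$ free subgroup of the image contains one, since free groups are projective; and the exceptional trace values form a finite set ($\pm\sqrt2$, $0$, $\pm1$ in type $(2,4,2)$, as in Corollary \ref{tauWform}). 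One technical flaw in your ``pigeon-hole'' step, however, deserves flagging: counting the $k$ roots against the finite exceptional list fails because of multiplicity --- the hard words are exactly those with $\tau_W(\lambda)=\sqrt2(\lambda^2-1)^{k/2}$, all of whose roots sit at $\pm1$ --- so the problem does not become easier as $k$ grows, and this is why the remaining even--odd subcase resists the method.

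Your prescription for the exceptional case is also not the route the paper (or the literature) takes. You propose to show the exceptional groups ``virtually solvable,'' but virtually solvable examples occur only at very short length (the conjecture is known for $k\le 4$, e.g.\ the virtually abelian group with $W=xyxyxyxy^3$ noted after Theorem \ref{eveneventhm}); for longer exceptional words the goal is the opposite, namely to exhibit a non-abelian free subgroup by methods that go beyond single essential representations. Concretely, the paper uses: the Benyash-Krivets character-variety argument on an index-$2$ subgroup, which converts an epimorphism onto $A_4$, $S_4$ or $A_5$ into a positive-dimensional character variety and hence a free subgroup (Lemma \ref{dim0}, Corollary \ref{Honto}, Theorem \ref{eveneventhm}); integrality and parity constraints on $\tau_W$ (Lemma \ref{parity} and its corollaries, yielding Theorem \ref{oddoddthm}); largeness via vanishing of the mod-$p$ Alexander polynomial and Button's theorem (Lemma \ref{large}, Theorems \ref{twotwosthm} and \ref{oddeventhm}); and, in the residual even--odd subcase, nonexistence results for words with $\tau_W(\lambda)=\sqrt2(\lambda^2-1)^m$ (Conjecture \ref{noW}, Lemma \ref{coeffs}, Corollary \ref{thirtythree}) supplemented by small-cancellation-assisted computer searches. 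So your proposal correctly frames steps (i) and (ii) but misidentifies the decisive third step, and in any case neither it nor the paper closes the conjecture.
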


The integer parameters $p,q,r$ in the above definition clearly play an important r\^{o}le in the study of genealized triangle groups, as does the {\em length parameter} $k$.  We shall say that $G$ is {\em of type} $(p,q,r)$, and {\em of length} $k$.  Clearly we may assume without loss of generality that $p\le q$, and that $W$ is not a proper power in $\Z_p\ast\Z_q$.

While  Conjecture \ref{Rconj} in full generality remains an open problem, there is a large body of evidence in support of it, in the form of proofs for suitable values of the parameters.  In particular, the conjecture has been proved if one of the following hold:

\begin{itemize}
\item $p=q=2$ (in which case, of course, $G$ is a finite dihedral group);
\item $k\le 4$ \cite{Ros,LR};
\item $r>2$ \cite{FLR};
\item $q>5$ \cite{LR,Will2,BK,BK2,BKB,BBK2,HW};
\item $p>2$ \cite{BBK,HW342,Ho13}.
\end{itemize}

See the survey \cite{FRR} for more  details.

The three remaining types, $(2,3,2)$, $(2,4,2)$ and $(2,5,2)$, can be attacked using similar methods to those used in the proofs of the results mentioned above, but each has additional inherent difficulties.  Nevertheless some partial results are known for these types \cite{BK03,BK07,Ho10,HK,HK-code}.

In the present paper we concentrate on generalised triangle groups of type $(2,4,2)$. and provide additional evidence in support of Rosenberger's Conjecture \ref{Rconj} by proving the following.

\begin{thm}\label{longce}
Suppose that
$$W=W(x,y)=xy^{a(1)}xy^{a(2)}\cdots xy^{a(k)}$$ with $a(j)\in\{1,2,3\}$ for each $j$, and that
$$G:=\<x,y\mid x^2=y^4=W(x,y)^2=1\>$$
is a counterexample to Conjecture \ref{Rconj}.
Then
\begin{itemize}
\item $a(j)=2$ for precisely one value of $j$;
\item $k$ has the form $2^ms$ with $m\ge 1$ and $s$ odd; and
$s\ge 17$ (hence $k\ge 34$ and $W$ has free-product length $\ge 68$).
\end{itemize}
\end{thm}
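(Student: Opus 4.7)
The plan is to follow the essential-representation approach that has succeeded for other types of generalised triangle groups. Any homomorphism $\rho \colon G \to \P$ with $\rho(x)$ of order $2$ and $\rho(y)$ of order $4$ is determined up to conjugacy by the trace parameter $t = \mathrm{tr}(\rho(x)\rho(y))$, and a standard recursion yields $\tau_W(t) := \mathrm{tr}(\rho(W))$ as a polynomial of degree $k$ in $t$ (with coefficients in $\Q(\sqrt 2)$ once $\mathrm{tr}(\rho(y)) = \pm \sqrt 2$ is fixed). The relation $W^2 = 1$ translates to $\tau_W(t) = 0$, so the essential representations of $G$ are parametrised by the roots of $\tau_W$. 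If for some root $t_0$ the corresponding image in $\P$ is not virtually solvable, the Tits alternative produces a non-abelian free subgroup of the image, any two free generators of which lift to a non-abelian free subgroup of $G$; thus a counterexample $G$ forces every root of $\tau_W$ to yield a virtually solvable image.

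First I would establish the parity conditions. The exponent sum of $x$ in $W$ equals $k$ and of $y$ equals $\sigma := \sum_j a(j)$; inspecting the abelianisation of $G$ together with small finite quotients (notably $S_4$, which already contains elements of orders $2$ and $4$) shows that unless $k$ is even and $\sigma$ is odd, $G$ either admits a quotient containing a non-abelian free subgroup or falls under one of the cases already disposed of in the introduction. Since a counterexample contradicts both conclusions of Conjecture \ref{Rconj}, both parities are forced; writing $k = 2^m s$ with $s$ odd then automatically gives $m \ge 1$. Combined, the two parities imply that the number of indices $j$ with $a(j) = 2$ has the same parity as $k - \sigma$, which is odd.

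The next task is to sharpen ``odd'' to ``exactly one''. Here I would exploit the block decomposition of $W$ induced by the syllables $y^2$: each such syllable plays a distinguished role in the trace recursion, so its presence contributes a factor to $\tau_W$ corresponding to the sub-words on either side. If more than one $y^2$-syllable appears, the resulting factorisation forces a proper factor of $\tau_W$ whose roots must simultaneously give virtually solvable representations; representation-theoretic considerations (distinguishing irreducible from reducible, and dihedral from higher-rank images in $\P$) rule this out except when a single $a(j)=2$ occurs. With $W$ thus restricted to one $y^2$-syllable among $k-1$ odd syllables, the final step is the lower bound $s \ge 17$: an induction on the odd part of $k$, checking for each small admissible $s$ and each cyclically inequivalent placement of the distinguished syllable that the resulting $\tau_W$ has a root with non-solvable image.

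The principal obstacle is the third paragraph: analysing the factorisation of $\tau_W$ sharply enough to pin the count of even syllables to exactly one is combinatorially subtle, because the interaction of the parameter $t$, the sign choice in $\mathrm{tr}(\rho(y)) = \pm \sqrt 2$, and the positions of the even syllables produces many subcases. By contrast, the parity conditions in the second paragraph reduce to a character-theoretic calculation, and the length bound $s \ge 17$ in the final step is a finite exhaustion that can be carried out by hand --- the stronger computer-aided bound $196$ advertised in the abstract would arise from pushing the same scheme much further.
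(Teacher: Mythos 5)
Your overall framework (essential representations into $\P$, the trace polynomial $\tau_W$, roots parametrising essential characters) is the right starting point, but the three substantive steps are not carried by the arguments you sketch. First, the reduction to the subcase ``$k$ even, $\sigma$ odd'' is not something that follows from the abelianisation plus small finite quotients, nor is it ``already disposed of in the introduction'': having an $S_4$ quotient does not by itself produce a free subgroup, and in the paper that implication (Corollary \ref{Honto}) rests on a dimension count for the essential character variety of an index-$4$ subgroup (Lemma \ref{dim0} plus the Benyash-Krivets index-two trick), while the even--even and odd--even parity subcases are eliminated by Theorems \ref{eveneventhm}, \ref{twotwosthm} and \ref{oddeventhm}, the last two via a largeness criterion (vanishing mod-$p$ Alexander polynomial and Button's theorem), not via a quotient with a free subgroup of an essential image. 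Second, your route to ``exactly one $y^2$-syllable'' through a factorisation of $\tau_W$ along the $y^2$-syllables does not exist: the trace recursion gives $\tau_{UV}=\tau_U\tau_V-\tau_{UV^{-1}}$, not a product decomposition, and no ``representation-theoretic'' case analysis is supplied. The actual mechanism is that Rosenberger's classification forces $\tau_W(\lambda)=\sqrt2^{\,a}\lambda^b(\lambda^2-1)^c$ or $\sqrt2^{\,a}\lambda^b(\lambda^2-2)^c$ (types (a) and (c) cannot coexist, again by the character-variety argument), and the boundedness $|\tau_W(\lambda)|\le 2$ on $[-\sqrt2,\sqrt2]$, coming from $SU(2)$ representations, pins $a=1$ in the surviving subcase; when two or more $y^2$-letters occur one needs the separate largeness argument of Theorem \ref{twotwosthm}.

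Third, the bound $s\ge 17$ cannot be obtained by the finite exhaustion you describe. You give no mechanism for passing from $k$ to its odd part; the paper needs Lemma \ref{div2}, which uses Frobenius mod $2$ and a subquotient construction on the normal closure of $y$ to halve the length parameter. More seriously, your stopping criterion --- ``the resulting $\tau_W$ has a root with non-solvable image'' --- can never be met for the words at issue: when $\tau_W(\lambda)=\sqrt2(\lambda^2-1)^{m}$ every essential representation has image a finite group of type (c) (in fact $S_4$), so free subgroups must come from elsewhere. The paper instead shows that no word has this trace polynomial for odd $5\le m\le 15$, using the mod-$2$ coefficient analysis of $(t+1+t^{-1})^m$ (Lemmas \ref{sums}, \ref{Laurent}), the coefficient counts of Lemma \ref{coeffs}, the mod-$8$ constraint of Lemma \ref{modeight}, and an ad hoc argument for $m=11$; and the short words that do survive at $k=6,8$ (the groups $G_1,G_2,G_3$) are handled by bespoke largeness arguments (Reidemeister--Schreier and graph quotients) in \S\ref{short}. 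A hand enumeration of all candidate words up to $k=30$ is in any case not feasible, which is precisely why the paper's structural lemmas, and for larger $k$ the computer searches of \S\ref{it}, are needed.
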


The study of generalised triangle groups of type $(2,4,2)$ splits naturally into four subcases, depending on the parity of the exponent-sums of $x,y$ in $W$.  The subcase where $x,y$ both have odd exponent sum is due to Benyash-Krivets \cite{BK07}; for completeness we include a slightly different proof -- see Theorem \ref{oddoddthm} below.   Benyash-Krivets also made significant progress in \cite{BK03} towards a proof in the subcase where $x,y$ both have even exponent sum; our proof in this subcase makes extensive use of the methods developed there.

Theorem \ref{longce} successfully deals with three of the four subcases.  Our methods have not proved sufficient to deal with the fourth subcase, in which the exponent-sums are odd and even respectively.  Nevertheless, the length inequality in Theorem \ref{longce} is very suggestive that the conjecture might also hold in that subcase.

Our proof of Theorem \ref{longce} is purely theoretical.  The methods behind the proof also serve to eliminate certain other values of the odd number $s$ associated to a putative counterexample -- including $21,23,29,31,45$ and $47$.   If we allow the use of computer searches as part of a proof technique, then the lower bound for $s$ can be significantly improved.  Indeed, by using small cancellation theory (a good technique for finding free subgroups) to streamline our searches, but running these only on a standard laptop computer in relatively short sessions (a few hours), we have obtained a lower bound of $49$ for $s$ -- and hence lower bounds of $98$ for $k$ and $196$ for the free-product length of $W$.   More detail on these computer searches are given in \S \ref{it} below.   Results in \S \ref{it} in which the proof involves computer methods have been labelled as {\bf Computer Lemma} or {\bf Computer Corollary} to distinguish them from results that have been explicitly proved by theoretical arguments.

\medskip
As usual with generalised triangle groups, our principle method is the consideration of {\em essential representations} into $\P$, that is to say, homomorphisms $\rho:G\to\P$ with the property that $\rho(x),\rho(y),\rho(W)$ have orders $p,q,r$ respectively.  These are (with a few well-understood exceptions) determined up to conjugacy by a polynomial $\tau_W(\lambda)\in\C[\lambda]$, called the {\em trace polynomial} or {\em Fricke polynomial} whose roots form the  (zero-dimensional) character variety corresponding to this class of representations.   

We give an analysis of character values of essential representations, and of trace polynomials, in \S \ref{charvar} below.  In particular we apply a trick due to Benyash-Krivets \cite{BK03}, together with some other observations, to give strong restrictions on the form of trace polynomial that can arise in any putative counterexample to the Rosenberger Conjecture of type $(2,4,2)$.

Two other techniques give rise to further partial results.  In \S \ref{intandpar} we show that the trace polynomial $\tau_W(\lambda)$ belongs either to $\Z[\lambda]$ or $\sqrt2\Z[\lambda]$, depending on the exponent-sums of $x$ and $y$ in $W$. In the former case we give restrictions on the parity of the coefficients, which in turn lead to restrictions on $W$ in any counterexample.   Then in \S \ref{largeness} we exploit a criterion for a group to be large -- that is, to contain a finite index subgroup admitting a non-abelian free homomorphic image -- to obtain further restrictions on $W$.  

The results of \S\S \ref{charvar} - \ref{largeness} suffice to prove the first two parts of Theorem \ref{longce}, and reduce us to consideration of words $W$ with trace polynomial $\tau_W(\lambda)=\sqrt2(\lambda^2-1)^{k/2}$.  The lower bound of $17$ for the odd part $s$ of $k$ is proved in \S \ref{evenodd}, after first eliminating certain short words $W$ by ad-hoc methods in \S \ref{short}.  

Finally, in \S \ref{it}, we discuss the logistics of computer searches that give rise to the increased lower bound of $49$ for $s$.

\section{Character variety methods}\label{charvar}

We consider the $\P$-character variety $\X_G$ of representations $\rho$ from a finitely generated group $G$ to $\P$.  If $G$ is $n$-generated, then the epimorphism $F_n\to G$ induces an embedding of $\X_G$ into $\X_{F_n}$, so it is useful to understand the character varieties of free groups.  In practice we need only the cases $n=2$ and $n=3$: the latter in order to apply the method developed by Benyash-Krivets \cite{BK03}, using the character variety of an index $2$ subgroup of the generalised triangle group when $y$ has even exponent-sum in $W$.  

Now any representation $\rho:F_n\to\P$ lifts to a representation to $SL(2,\C)$, and the $SL(2,\C)$-characters of $F_2$ and $F_3$ are well-understood through 19'th century work of Fricke and others.  An accessible exposition of these results is available in \cite{Gold}, for example.  We recall here only the basic facts which we will require in the sequel.

 If $\{x_1,x_2\}$ is a basis for $F_2$, then  the $SL(2,\C)$-character variety of $F_2$ is a $3$-dimensional affine space with basis consisting of the traces of $\rho(x_1),\rho(x_2)$ and $\rho(x_1x_2)$.  Thus the character of an essential $\P$- representation $\rho$ of a generalised triangle group $$\<x,y|x^2=y^4=W(x,y)^2=1\>$$ corresponds to a point $(\alpha,\beta,\lambda)$ of $\X_{F_2}$ such that
$\alpha=Tr(\rho(x))=0$ since $\rho(x)$ has order $2$, $\beta=Tr(\rho(y))=\pm\sqrt2$ since $\rho(y)$ has order $4$, and $Tr(\rho(W))=0$ since $\rho(W)$ has order $2$.   In practice we will fix $\beta=+\sqrt2$: the trace of $\rho(W)$ is then given by a polynomial $\tau_W(\lambda)$ in the variable $\lambda:=Tr(\rho(xy))$, which we call the {\em trace polynomial} of $W$. 

If $\{x_1,x_2,x_3\}$ is a basis for $F_3$, then   the $SL(2,\C)$-character variety of $F_3$ is a $6$-dimensional hypersurface in the $7$-dimensional affine space with basis consisting of the traces of $\rho(x_1)$, $\rho(x_2)$, $\rho(x_3)$, $\rho(x_1x_2)$, $\rho(x_1x_3)$, $\rho(x_2x_3)$ and $\rho(x_1x_2x_3)$.  It is defined by a single polynomial in these variables, which is quadratic in $Trace(\rho(x_1x_2x_3))$.

\medskip
Rosenberger \cite{Ros} classifies finitely generated subgroups $\G<\P$ without non-abelian free subgroups into three mutually exclusive types:
\begin{itemize}
\item[(a)] $\G$ is reducible; 
\item[(b)] $\G$ is irreducible, but contains a reducible abelian subgroup $\Delta$ of index $2$, and every element of $\G\setminus\Delta$ has order $2$;
\item[(c)] $\G$ is conjugate to $A_4$, $S_4$ or $A_5$.
\end{itemize} 

If $G=gp\{x,y\}$ is a $2$-generator group without non-abelian free subgroups, then Rosenberger's classification splits the character variety $\X_G$ of representations  $\rho:G\to\P$ into a disjoint union of three sets $\V(a)$, $\V(b)$ and $\V(c)$, depending on whether $\rho(G)$ is of type (a), (b), or (c).  
The character $[\rho]$ of a representation $\rho:G\to\P$ belongs to $\V(a)$ if and only if $\rho(xyx^{-1}y^{-1})$ has trace $2$. Hence $\V(a)$ is defined by a single trace equation and so is Zariski closed.

The character $[\rho]$ belongs to $\V(b)$ if and only if $[\rho]\notin\V(a)$ and at least two of $\rho(x)$, $\rho(y)$, $\rho(xy)$ have order $2$ in $\P$ -- and hence trace $0$.   Thus $\V(a)\cup\V(b)$ is the union of four sets, each defined by at most two trace equations, so it is also Zariski closed.

It follows that $\V(c)=\X_G\setminus(\V(a)\cup\V(b))$ is Zariski-open in $\X_G$.

If $\V$ is an irreducible component of the character variety $\X_G$, and contains the characters of representations $\rho_1,\rho_2$ of type (c),
then $\V\cap\V(c)$ is a non-empty, Zariski-open subset of $\V$, and hence dense in $\V$.  Since traces of matrices representing elements of $A_4\cup S_4\cup A_5$ belong to a finite set, and the trace function is continuous on $\X_G$, it follows that $Tr\rho(g)$ is constant on $\V$ for each $g\in G$.  In particular, $\rho_1,\rho_2$ have the same character: and $\V$ is a single point.  Hence:

\begin{lem}\label{dim0}
If $G$ is a  $2$-generator group without non-abelian free subgroups, and admits one of $A_4,S_4,A_5$ as a homomorphic image, then the $\P$-character variety of $G$ is $0$-dimensional.
\end{lem}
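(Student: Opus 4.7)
The plan is to combine the decomposition $\X_G=\V(a)\cup\V(b)\cup\V(c)$ from the discussion above with two crucial facts about $\V(c)$: that it is Zariski-open in $\X_G$, and that it is a \emph{finite} set. For the second of these I would argue that since each of $A_4$, $S_4$, $A_5\subset\P$ has only finitely many conjugacy classes, the possible trace values of their elements (after lifting to $SL(2,\C)$) form a fixed finite subset of $\C$; a character in $\X_G$ is determined by the traces of a generating pair of $G$, so in a type (c) representation each such trace is drawn from this finite set and only finitely many characters can occur.

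I would then use the hypothesis to show that $\V(c)\ne\emptyset$: composing a surjection $G\twoheadrightarrow H$ (with $H\in\{A_4,S_4,A_5\}$) with any faithful embedding $H\hookrightarrow\P$ produces a representation whose image is of type (c), so its character lies in $\V(c)$. Combining this with the finiteness of $\V(c)$ and the argument in the paragraph immediately preceding the lemma, any irreducible component $\V$ of $\X_G$ meeting $\V(c)$ has $\V\cap\V(c)$ as a non-empty Zariski-open, hence dense, subset of $\V$; its finiteness together with the continuity of the trace functions then forces $\V$ to be a single point.

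The step I expect to be most delicate is ensuring that \emph{every} irreducible component of $\X_G$ actually meets $\V(c)$, rather than only those containing the particular type (c) character produced above. In the intended context of generalised triangle groups this is not an issue because $G^{\mathrm{ab}}$ is finite --- which forces $\V(a)$ to be finite directly, since reducible representations factor through $G^{\mathrm{ab}}$ --- and an analogous rigidity constraint pins down $\V(b)$; but a clean general argument would need to verify that $\V(c)$ is in fact Zariski-dense in $\X_G$ under the stated hypotheses, and that is where the real work of the proof lies.
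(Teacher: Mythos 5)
Your first two paragraphs reproduce the paper's own argument, which is the paragraph immediately preceding the lemma: the decomposition of $\X_G$ into $\V(a)\cup\V(b)\cup\V(c)$, the Zariski-openness of $\V(c)$, the fact that traces of elements of $A_4\cup S_4\cup A_5$ lie in a fixed finite set, and the conclusion that on any irreducible component meeting $\V(c)$ every trace function is constant, so that the component is a single point; the hypothesis that $G$ surjects onto $A_4$, $S_4$ or $A_5$ enters, exactly as you use it, only to make $\V(c)$ non-empty. (One small correction: a $\P$-character of a two-generator group is determined by the traces of $x$, $y$ \emph{and} $xy$, not by the traces of the generating pair alone; since $xy$ also maps into the finite image, your finiteness claim for $\V(c)$ is unaffected.)

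The ``delicate step'' you flag --- irreducible components of $\X_G$ that do not meet $\V(c)$ --- is precisely where the paper's argument stops as well: the lemma is deduced directly from the statement about components containing type (c) characters, and no argument is offered for components lying inside $\V(a)\cup\V(b)$. So you have not overlooked an idea that is present in the paper, and your caution is justified: in the stated generality the gap cannot be closed, because the literal statement fails. For instance $G=\Z\times A_5$ is two-generated, has no non-abelian free subgroup and surjects onto $A_5$, yet the representations killing the $A_5$ factor give a positive-dimensional family of type (a) characters, so $\X_G$ is not $0$-dimensional. What your argument (like the paper's) genuinely establishes is that every irreducible component containing a type (c) character is a single point --- equivalently, that $\V(c)$ is finite --- and any application of the lemma (such as Corollary \ref{Honto}) implicitly requires the positive-dimensional families produced there to contain type (c) characters. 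Your proposed fallback via finiteness of the abelianisation does not repair this: it is not a hypothesis of the lemma, and it is not verified for the subgroup $H$ to which the lemma is later applied.
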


The following consequence of Lemma \ref{dim0} is essentially due to Benyash-Krivets \cite{BK03}.

\begin{cor}\label{Honto}
Let $$G:=\<x,y\mid x^2=y^4=W(x,y)^2=1\>$$
where $W:=xy^{a(1)}xy^{a(2)}\cdots xy^{a(k)}$ with $\sum_j a(j)$ even.  Let $K<G$ be the subgroup generated by  $u:=x$, $v:=yxy^{-1}$ and $z:=y^2$; and let $H<K$ the subgroup generated by $\{uz,zv\}$.  If $G$ or $K$ or $H$  has one of $A_4,S_4,A_5$ as a homomorphic image, then $G$ contains a non-abelian free subgroup.
\end{cor}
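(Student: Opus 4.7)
\begin{spf}
The approach is by contrapositive: assuming $G$ contains no non-abelian free subgroup, I aim to show that none of $G$, $K$, $H$ admits $A_4$, $S_4$, or $A_5$ as a quotient. First I verify the claimed structure: the hypothesis that $\sum_j a(j)$ is even gives a well-defined epimorphism $G\twoheadrightarrow\Z/2$ with $x\mapsto 0$, $y\mapsto 1$, whose kernel is $K$, so $K$ has index $2$ in $G$. A Reidemeister--Schreier calculation with coset representatives $\{1,y\}$ confirms the generating set $\{u,v,z\}$ of $K$. As subgroups of $G$, both $K$ and $H$ inherit the absence of non-abelian free subgroups.

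Suppose, for contradiction, that one of $G,K,H$ surjects onto some $\Gamma\in\{A_4,S_4,A_5\}$. Composing with the standard rotational embedding $\Gamma\hookrightarrow SO(3)\subset\P$ gives a representation $\rho$ of Rosenberger type (c). In the 2-generator cases $G$ and $H$, Lemma \ref{dim0} applies directly, so the relevant character variety is $0$-dimensional. For the 3-generator group $K$, I would argue that the proof of Lemma \ref{dim0} adapts without modification: it rests only on Rosenberger's trichotomy, Zariski-closedness of $\V(a)\cup\V(b)$ (with $\V(c)$ open), and the finiteness of the trace set on $A_4\cup S_4\cup A_5$, none of which depends on the number of generators. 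Thus any irreducible component of $\X_K$ containing the character $[\rho]$ is a single point.

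The main obstacle is extracting a concrete contradiction from these dimension conclusions. For $K$ the argument should proceed by a dimension count on $\X_K$: given the presentation $K=\<u,v,z\mid u^2,v^2,z^2,\widetilde{W}^2,\ldots\>$ where $\widetilde{W}(u,v,z)$ is the Reidemeister--Schreier rewriting of $W$, the variety $\X_K$ is cut out of the $6$-dimensional hypersurface $\X_{F_3}$ by only four trace equations, giving an expected positive dimension for components of $\X_K$ that should contain $[\rho]$ -- contradicting the single-point conclusion above. For the 2-generator cases $G$ and $H$, whose character varieties can be $0$-dimensional a priori, the contradiction is more delicate and likely requires either restricting the possible $\Gamma$ via group-theoretic considerations (only $S_4$ can arise as a quotient of $G$, since $A_4$ and $A_5$ lack elements of order $4$) and then reducing to the $K$-case through the induced quotient on the index-$2$ subgroup, or working directly with the trace polynomial $\tau_W$ to show that the value of $\lambda=\mathrm{Tr}(\rho(xy))$ forced by $\rho(G)\subset S_4$ is incompatible with the required root structure of $\tau_W$. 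I expect this last reduction, following the Benyash-Krivets strategy of \cite{BK03}, to be the technically most involved part of the argument.
\end{spf}
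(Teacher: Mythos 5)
There is a genuine gap, and it sits exactly where you yourself locate the difficulty. The paper's proof runs in the opposite direction to yours: instead of extracting the contradiction at the level of $K$, it first pushes the hypothesis all the way down to $H$. Since $|G:K|=|K:H|=2$ and the image of an index-$2$ subgroup under an epimorphism onto one of $A_4,S_4,A_5$ has index at most $2$ in the target, hence is again one of $A_4,S_4,A_5$, one may assume from the outset that $H$ itself has such a quotient. (Quotients descend to index-$2$ subgroups but do not ascend, so your plan of reducing the $G$-case to the $K$-case can never dispose of the $H$-case, which your proposal leaves with only the vague alternative of ``working directly with $\tau_W$''.) The contradiction is then produced at $H$: the essential characters of $K$ form a subvariety $\mathcal{EX}_K$ of the $6$-dimensional hypersurface $\X_{F_3}$ cut out by the five trace equations $Tr\rho(u)=Tr\rho(v)=Tr\rho(z)=Tr\rho(W_1)=Tr\rho(W_2)=0$ (the Reidemeister--Schreier rewriting of the single relator $W^2$ contributes two relators $W_1^2,W_2^2$, coming from $W$ and $yWy^{-1}$, so five equations rather than your four), whence $\dim\mathcal{EX}_K\ge 1$; and the restriction map $\mathcal{EX}_K\to\X_H$ is at most $2$-to-$1$, because three of the seven Fricke coordinates vanish, the traces of $uv,uz,vz$ determine the character of $\rho|_H$, and $Tr\rho(uvz)$ satisfies a quadratic over the other six. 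Hence $\X_H$ has dimension $\ge 1$, and Lemma \ref{dim0} applied to the $2$-generator group $H$ yields the free subgroup.

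Your route through $K$ does not close up as stated, for two reasons. First, extending Lemma \ref{dim0} to $3$-generator groups is not free of charge: the paper's trace descriptions of $\V(a)$ (cut out by $Tr\rho(xyx^{-1}y^{-1})=2$) and of $\V(b)$ are specific to $2$-generator groups, and the whole point of introducing $H$ and the $2$-to-$1$ restriction map is to avoid invoking the lemma outside that setting. Second, and more seriously, the dimension count you appeal to concerns the essential locus: it shows that every non-empty component of $\mathcal{EX}_K$ has dimension $\ge 1$, but says nothing about the component of $\X_K$ containing the character $[\rho]$ of your finite quotient. That representation need not be essential -- the relators only force each of $u,v,z,W_1,W_2$ to have order dividing $2$ in $\P$, and nothing rules out, say, $\rho(W_1)=1$ -- and $\X_K$ is not the zero set of those five trace functions, so there is no link between the positive-dimensional components you produce and the single-point components your adapted lemma gives you. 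The paper sidesteps this mismatch by using the global form of Lemma \ref{dim0} for $H$: any positive-dimensional family of $\P$-characters of $H$ whatsoever contradicts $0$-dimensionality, so the family need not pass through the character of the finite representation. (Your parenthetical claim that only $S_4$ can occur as a quotient of $G$ is correct -- in $A_4$ or $A_5$ the images of $x$ and $y$ would both be involutions and would generate a dihedral group -- but it becomes unnecessary once the reduction to $H$ is in place.)
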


\begin{proof}
Now $|G:K|=|K:H|=2$.  The image of $K$ under an epimorphism from $G$ onto one of $A_4,S_4,A_5$ has image of index at most $2$, so is also isomorphic to one of $A_4,S_4,A_5$.  Similarly any epimorphism from $K$ onto one of $A_4,S_4,A_5$ restricts to an epimorphism from $H$ onto one of $A_4,S_4,A_5$.  So we may assume that such a representation of $H$ exists.  

Now $K$ has a presentation of the form
$$\<u,v,z|u^2=v^2=z^2=W_1^2=W_2^2=1\>,$$
where $W_1,W_2$ are rewrites of $W,yWy^{-1}$ in terms of $\{u,v,z\}$.   

An {\em essential} representation $\rho:K\to\P$ satisfies 
\begin{equation}\label{charK}
Tr\rho(u)=Tr\rho(v)=Tr\rho(z)=Tr\rho(W_1)=Tr\rho(W_2)=0.
\end{equation}
Conversely, any homomorphism $\<u,v,z\> \to\P$ satisfying \eqref{charK} defines an essential representation of $K$ to $\P$. So the subvariety  $\mathcal{EX}_K\subseteq\mathcal{X}_K$ of essential characters has dimension $\ge 1$.

Note that, of the seven trace parameters which determine the character $[\rho]$ of an essential $\P$-representation of $K$, three (the traces of $u,v,z$) are zero.   Of the four remaining trace parameters, three (the traces of $uv,uz,vz$) determine the character of the restriction of $\rho$ to $H$, and the last (the trace of $uvz$) satisfies a quadratic equation in terms of the first six.  Hence restricting such representations to $H$ gives a polynomial map $\Phi$ from $\mathcal{EX}_K$ to the $\P$-character variety $\mathcal{X}_H$ of $H$ which is at most $2$-to-$1$.  Since  $\mathcal{EX}_K$ has dimension $\ge 1$, it follows that  $\mathcal{X}_H$  also has dimension $\ge 1$. 

By the Lemma, $H$ must have a non-abelian free subgroup, and hence so also has $G$.
\end{proof}

\begin{cor}\label{notac}
Let $G$ be a generalised triangle group of type $(2,4,2)$ admitting $\P$-representations of types (a) and (c).  Then $G$ has a non-abelian free subgroup.
\end{cor}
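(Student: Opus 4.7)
The plan is to reduce the corollary to Corollary~\ref{Honto}, whose hypotheses require both an epimorphism $G \twoheadrightarrow H$ with $H\in\{A_4,S_4,A_5\}$ and the parity condition that $\sum_j a(j)$ is even. Each of the two standing hypotheses will supply one ingredient. Throughout I interpret a $\P$-representation of type (a) or (c) in the essential sense, so that $\rho(x)$, $\rho(y)$, $\rho(W)$ have orders exactly $2,4,2$.

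From the type (c) representation $\rho_c$, the image $\rho_c(G)$ is conjugate to one of $A_4$, $S_4$, $A_5$. Among these, only $S_4$ contains an element of order $4$, and essentiality forces $\rho_c(y)$ to have order exactly $4$. Hence $\rho_c(G)\cong S_4$, so $G$ admits $S_4$ as a quotient.

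From the type (a) representation $\rho_a$, reducibility allows us to conjugate $\rho_a$ into the upper-triangular subgroup of $SL(2,\C)$. Essentiality then forces the diagonal entries of $\rho_a(x)$ to lie in $\{\pm i\}$ and those of $\rho_a(y)$ to be $\{b,b^{-1}\}$ for some primitive $8$th root of unity $b$. Consequently the top-left entry of $\rho_a(W)$ equals $(\pm i)^k b^{\sum_j a(j)}$, and the condition that $\rho_a(W)$ has order exactly $2$ in $\P$ (equivalently $\rho_a(W)^2=\pm I$ with $\rho_a(W)\ne\pm I$) forces this entry to be a primitive $4$th root of unity. Since $b$ has order $8$, this in turn forces $\sum_j a(j)$ to be even.

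With the epimorphism onto $S_4$ and the parity condition both established, Corollary~\ref{Honto} applies directly and yields a non-abelian free subgroup of $G$. The substantive step is the upper-triangular matrix calculation in the second paragraph, which is routine but requires careful bookkeeping of powers of $b$ and $i$; everything else is immediate once essentiality is invoked.
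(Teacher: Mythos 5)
Your proposal is correct and follows essentially the same route as the paper: the type (c) representation supplies a quotient isomorphic to one of $A_4,S_4,A_5$ (indeed $S_4$), the type (a) representation forces the exponent-sum of $y$ in $W$ to be even, and Corollary~\ref{Honto} then applies. The paper states the parity implication without proof, while you supply the upper-triangular eigenvalue computation that justifies it, so the content is the same with your details filled in.
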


\begin{proof}
Immediate from Corollary \ref{Honto}, since $$G=\<x,y|x^2=y^4=W^2=1\>$$ has an essential $\P$-representation of type (a) only if $y$ has even exponent-sum in $W$.
\end{proof}

\begin{thm}\label{eveneventhm}
Let $$G:=\<x,y\mid x^2=y^4=W(x,y)^2=1\>$$
where $W:=xy^{a(1)}xy^{a(2)}\cdots xy^{a(k)}$ with $a(j)\in\{1,3\}$ for each $j$, and with $k\ge 6$ even.  Then $G$ has a non-abelian free subgroup.
\end{thm}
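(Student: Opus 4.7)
\begin{spf}
The plan is to apply the Benyash-Krivets trick of Corollary \ref{Honto}. Since every $a(j)\in\{1,3\}$ is odd and $k$ is even, $\sum_j a(j)$ is even, so $y$ has even exponent-sum in $W$ and we are in the even-even subcase. Form the iterated index-$2$ subgroups $K=\<u,v,z\><G$ and $H=\<uz,zv\><K$ of Corollary \ref{Honto}; then $K$ has a presentation
$$\<u,v,z\mid u^2=v^2=z^2=W_1^2=W_2^2=1\>,$$
where $W_1,W_2$ are the rewrites of $W$ and $yWy^{-1}$ in the alphabet $\{u,v,z\}$.

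As in the proof of Corollary \ref{Honto}, the subvariety $\mathcal{EX}_K\subset\X_K$ of essential characters is cut out of the $6$-dimensional $\X_K$ by the five trace-zero conditions on $u,v,z,W_1,W_2$, so $\dim\mathcal{EX}_K\ge 1$; the restriction map $\Phi:\mathcal{EX}_K\to\X_H$ is at most $2$-to-$1$, so $\dim\X_H\ge 1$ as well. I would then argue by contradiction. Suppose $G$ contains no non-abelian free subgroup. Then neither does $H\le G$, and since $F_2$ is projective (so any $F_2$-quotient of a subgroup of $H$ splits off as a subgroup of $H$), neither does $\rho(H)\le\P$ for any representation $\rho:H\to\P$. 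By Rosenberger's trichotomy, every such $\rho(H)$ therefore has type (a), (b) or (c).

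Inside $\X_H\subset\C^3$ with Fricke coordinates $\bigl(Tr(\rho(uz)),Tr(\rho(zv)),Tr(\rho(uz\cdot zv))\bigr)$, the type-(c) locus $\V_H(c)$ is $0$-dimensional, since only finitely many characters of $H$ have image conjugate to one of $A_4,S_4,A_5$; the reducibility locus $\V_H(a)$ is a hypersurface cut out by the Fricke commutator identity $Tr(\rho([uz,zv]))=2$; and the type-(b) locus $\V_H(b)$ is the union of the three $1$-dimensional coordinate lines on which two of the three Fricke coordinates vanish. The final step, and the main obstacle, is to use the combinatorial form of $W$ (with $a(j)\in\{1,3\}$ and $k\ge 6$) to exhibit a character in $\Phi(\mathcal{EX}_K)$ lying outside $\V_H(a)\cup\V_H(b)$, contradicting the trichotomy. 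This reduces to checking that the two trace equations $Tr(W_1)=Tr(W_2)=0$, together with the defining quadric of $\X_{F_3}$, do not conspire to trap the entire $1$-parameter family in the reducible or dihedral loci; the hypothesis $k\ge 6$ is what provides the combinatorial slack needed to rule out such degeneration.
\end{spf}
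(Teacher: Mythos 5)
Your setup is exactly the paper's: the subgroups $K$ and $H$ of Corollary \ref{Honto}, the bound $\dim\mathcal{EX}_K\ge 1$, the at most $2$-to-$1$ restriction map $\Phi$, and Rosenberger's trichotomy for the images of representations of $H$. But the proof stops precisely where the theorem's actual content begins. Your final paragraph declares that it "reduces to checking" that the $1$-parameter family is not trapped in the type (a)/(b)/(c) loci, with the hypothesis $k\ge 6$ providing unspecified "combinatorial slack". That check is not routine and is not carried out: note that for $k=4$ the group $\<x,y|x^2=y^4=(xyxyxyxy^3)^2=1\>$ is virtually abelian (the Remark after the theorem, citing \cite[Lemma 7]{LR}), so the family really is trapped in the degenerate loci in that case, and any honest argument must exhibit a mechanism that separates $k\ge 6$ from $k=4$. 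Your sketch contains no such mechanism, so there is a genuine gap.

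For comparison, the paper closes this gap as follows. Working in $\mathcal{EX}_K$ (not in $\X_H$, because reducibility of $\rho|_H$ is governed by $Tr\rho(uvz)$, which is a coordinate of $\X_K$ rather than of $\X_H$), it considers the Zariski-open set $\mathcal{U}_1$ where $Tr\rho(uvz)\ne 0\ne Tr\rho(uv)$. On $\mathcal{U}_1$ the restriction $\rho|_H$ is irreducible and (absent type (c), by Corollary \ref{Honto}) must be of type (b); since $Tr\rho((uz)(zv))=-Tr\rho(uv)\ne 0$, this forces $Tr\rho(uz)=Tr\rho(zv)=0$, i.e.\ $\mathcal{U}_1\subseteq\mathcal{U}_2$. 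The set $\mathcal{U}_2$ is then shown to be finite: the vanishing traces force $\rho(z)$ to commute with $\rho(u)$ and $\rho(v)$, the relator conditions become $Tr\rho((uv)^{k/2}z^s)=0$, hence $\rho((uv)^k)=\pm I$ and only finitely many values of $Tr\rho(uv)$ occur. Finally $\mathcal{U}_1$ is shown to be non-empty by an explicit choice of matrices $U,V,Z$ with $\omega=e^{2i\pi/k}$ or $e^{i\pi/k}$; this is the one place $k\ge 6$ is used (for $k=4$ one gets $Tr(UV)=-2\cos(2\pi/k)=0$, and the construction fails). Non-emptiness makes $\mathcal{U}_1$ at least $1$-dimensional, contradicting $\mathcal{U}_1\subseteq\mathcal{U}_2$ finite. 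Some argument of this kind — identifying exactly which trace conditions type (b) imposes, proving finiteness of that locus, and exhibiting an explicit point avoiding it — is what your proposal still needs.
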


\begin{proof}
Suppose not.  Then by Corollary \ref{Honto} there is no representation $H\to\P$ of type (c).  In the notation of the proof of Corollary \ref{Honto}, we consider the image of the Zariski-open subset 
$$\mathcal{U}_1:=\{[\rho]\in\mathcal{EX}_K, Tr\rho(uvz)\ne 0\ne Tr\rho(uv)\}$$
 under the restriction map $\Phi:\mathcal{EX}_K\to\mathcal{X}_H$ (where $[\rho]$ denotes the character of the representation $\rho$).  The restriction to $H$ of an essential representation $\rho:K\to\P$ is reducible if and only if 
$$2=Tr\rho((uz)(zv)(uz)^{-1}(zv)^{-1})=-Tr\rho((uvz)^2)=2-Tr\rho(uvz)$$  
(using the fact that $\rho(u^2)=\rho(v^2)=\rho(z^2)=-I$).
So for $[\rho]\in\mathcal{U}_1$ we must have $\rho|_H$ of type (b).  Moreover, since 
$$Tr\rho((uz)(zv))=-Tr\rho(uv)\ne 0,$$
 it follows that $\mathcal{U}_1$ is contained in the Zariski-closed subset 
$$\mathcal{U}_2:=\{[\rho]\in\mathcal{EX}_K,Tr\rho(uz)=Tr\rho(zv)=0\}.$$
We complete the proof by showing that $\mathcal{U}_2$ is finite, while $\mathcal{U}_1$ is non-empty, and hence $1$-dimensional.  This gives a contradiction.

To see that $\mathcal{U}_2$ is finite, note that the conditions 
$$Tr\rho(u)=Tr\rho(v)=Tr\rho(z)=Tr\rho(uz)=Tr\rho(zv)=0$$
 imply that the image of $[\rho]\in\mathcal{U}_2$ in $\mathcal{X}_H$ is determined by the single parameter $Tr\rho(uv)$.  They also imply that $\rho(z)$ commutes with each of $\rho(u)$ and $\rho(v)$. So the defining equations 
$$Tr\rho(W_1)=Tr\rho(W_2)=0$$ in $\mathcal{EX}_K$ are equivalent to $$Tr(\rho((uv)^{k/2}z^s))=Tr(\rho(
(vu)^{k/2}z^{k-s}))=0$$ for some integer $s$.   Hence $\rho((uv)^k)=\pm I$ (depending on the parity of $s$), and so only finitely many values for $Tr\rho(uv)$ are possible.

By the same analysis, to show that $\mathcal{U}_1$ is non-empty, it suffices to solve the matrix equations $$Tr(U)=Tr(V)=Tr(Z)=Tr(UZ)=Tr(VZ)$$ $$=Tr((UV)^{k/2}Z^s)=Tr((VU)^{k/2}Z^{k-s})=0,$$ together with the inequalities $$Tr(UV)\ne 0\ne Tr(UVZ),$$ in $SL(2,\C)$.  Since $k\ge 6$ we can do this:
put
$$U:=\left(\begin{array}{cc} 0 & 1\\ -1 & 0\end{array}\right),\qquad
V:=\left(\begin{array}{cc} 0 & \omega\\ -\ol\omega & 0\end{array}\right),\qquad
Z:=\left(\begin{array}{cc} i & 0\\ 0 & -i\end{array}\right),$$
where $\omega:=e^{2i\pi/k}$ if $s$ is odd, or $\omega:=e^{i\pi/k}$ if $s$ is even.  
\end{proof}

\noindent{\bf Remark}
The inequality $k\ge 6$ in Theorem \ref{eveneventhm} cannot be removed: $\<x,y|x^2=y^4=(xyxyxyxy^3)^2=1\>$ is virtually abelian \cite[Lemma 7]{LR}.

\section{Integrality and parity methods}\label{intandpar}
If $$G=\<x,y|x^2=y^4=W(x,y)^2=1\>$$ is a generalised triangle group of type $(2,4,2)$, then its essential $\P$-character variety consists of the set of roots of the trace polynomial $\tau_W(\lambda):=Tr(W(X,Y))$, where $X,Y\in SL(2,\C)$ with $Tr(X)=0$, $Tr(Y)=\sqrt2$ and $Tr(XY)=\lambda$.

Some useful properties of the trace polynomial are summarised in the following:

\begin{lem}\label{parity}
\begin{enumerate}
\item $\tau_W(\lambda)$ is an odd or even polynomial in $\lambda$, depending on the parity of the length parameter $k$.  
\item If $k>0$ then its leading coefficient is $\sqrt2^a$ where  $a$ denotes the number of $y^2$-letters in $W$.  
\item $\tau_W(\lambda)\in\Z[\lambda]$ if $a$ is even; and $\tau_W(\lambda)\in\sqrt2\Z[\lambda]$ if $a$ is odd.
\item If $\lambda$ is any real number with $-\sqrt2\le\lambda\le\sqrt2$, then $-2\le\tau_W(\lambda)\le 2$.
\end{enumerate}
\end{lem}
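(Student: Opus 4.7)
\begin{spf}
I would tackle the four parts in order. For (1), replacing $X$ by $-X$ preserves $Tr(X)=0$ and sends $\lambda=Tr(XY)$ to $-\lambda$, while $W(-X,Y)=(-1)^kW(X,Y)$ since $W$ has exactly $k$ occurrences of $x$; taking traces yields $\tau_W(-\lambda)=(-1)^k\tau_W(\lambda)$, which is (1).

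For (2), I would use $Y^2=\sqrt2\,Y-I$ and $Y^3=Y-\sqrt2\,I$ (both from the characteristic polynomial of $Y$) to expand each $XY^{a(j)}$ in the basis $\{A,B\}:=\{X,XY\}$: explicitly $XY^{a(j)}=c_jB+d_jA$ with $(c_j,d_j)=(1,0),(\sqrt2,-1),(1,-\sqrt2)$ when $a(j)=1,2,3$. Expanding $W=\prod_j(c_jB+d_jA)$ yields a $\Z[\sqrt2]$-combination of length-$k$ words in $\{A,B\}$; the unique all-$B$ term is $\sqrt2^{\,a}B^k$, whose trace equals $\sqrt2^{\,a}\lambda^k$ plus lower order. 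For any other term --- a word $M$ containing at least one $A$-letter --- one needs $\deg_\lambda Tr(M)<k$. I would deduce this from the bigraded form of Fricke's theorem: the leading bidegree-$(n_A,n_B)$ part of $Tr(M)\in\Z[Tr(A),Tr(B),Tr(AB)]$ (for the weights $(1,0),(0,1),(1,1)$ on the three trace variables) is spanned by monomials $Tr(A)^iTr(B)^jTr(AB)^m$ with $i+m=n_A\ge 1$, each carrying a positive power of $Tr(A)$ or $Tr(AB)$; substituting $Tr(A)=0$ and $Tr(AB)=-\sqrt2$ then kills the $\lambda^k$ contribution. I expect this bidegree step to be the main technical point.

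For (3), I would use the Galois involution $\sigma:\sqrt2\mapsto-\sqrt2$ on $\Q(\sqrt2)$. Geometrically, $\sigma(\tau_W)(\lambda)$ is the trace polynomial evaluated on the branch $Tr(Y)=-\sqrt2$; substituting $Y\mapsto-Y$ (so $Tr(X(-Y))=-\lambda$ and $W(X,-Y)=(-1)^{\sum a(j)}W(X,Y)$) and combining with part (1) gives $\sigma(\tau_W)(\lambda)=(-1)^{\sum a(j)+k}\tau_W(\lambda)=(-1)^a\tau_W(\lambda)$, the last equality since $\sum a(j)\equiv k-a\pmod 2$. Writing $\tau_W=P+\sqrt2\,Q$ with $P,Q\in\Z[\lambda]$, the identity $\sigma(\tau_W)=(-1)^a\tau_W$ forces $Q=0$ when $a$ is even and $P=0$ when $a$ is odd.

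For (4), given a real $\lambda\in[-\sqrt2,\sqrt2]$, I would realise the character $(0,\sqrt2,\lambda)$ inside $SU(2)$: take $X=\mathrm{diag}(i,-i)$ together with $Y\in SU(2)$ whose diagonal entries are $\tfrac{\sqrt2}{2}\mp\tfrac{i\lambda}{2}$ and whose off-diagonal entries have squared modulus $\tfrac12-\tfrac{\lambda^2}{4}\ge 0$. Then $W(X,Y)\in SU(2)$ and hence $\tau_W(\lambda)=Tr(W(X,Y))\in[-2,2]$.
\end{spf}
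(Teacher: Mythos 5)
Your proposal is correct in substance, but it reaches parts (1)--(3) by a genuinely different route from the paper. The paper proves (1)--(3) in one stroke by induction on $k$: it writes $W=UV$ with $V=xy^{a(k)}$, applies the identity $Tr(UV)+Tr(UV^{-1})=Tr(U)Tr(V)$, and uses the base cases $\tau_{xy^2}=\sqrt2\lambda$, $\tau_{xy^3}=\lambda$, $\tau_1=2$; parity, the leading coefficient $\sqrt2^{\,a}$ and the integrality statement all drop out of the same induction. You instead get (1) from the sign symmetry $X\mapsto -X$, (3) from the Galois involution $\sqrt2\mapsto-\sqrt2$ combined with $Y\mapsto -Y$ and the congruence $\sum a(j)\equiv k-a\pmod 2$, and (2) from the Cayley--Hamilton expansion $XY^{a(j)}=c_jXY+d_jX$ together with a bigraded degree bound on word traces. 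These symmetry arguments are slicker and more conceptual than the induction, and they make the parity statements transparent; what they cost is that you must import two facts about Fricke/Horowitz trace polynomials that you do not prove: integrality of the coefficients (needed in (3) to write $\tau_W=P+\sqrt2\,Q$ with $P,Q\in\Z[\lambda]$) and the bigraded bound that a positive word with $n_A$ letters $A$ and $n_B$ letters $B$ has trace polynomial of $(y,z)$-weighted degree at most $n_B$ (needed in (2) so that every non-$B^k$ term contributes $\lambda$-degree at most $n_B\le k-1$ after setting $Tr(A)=0$, $Tr(AB)=-\sqrt2$). Both are standard (Horowitz/Goldman) and are themselves proved by exactly the inductive trace-identity argument the paper runs directly, so your route is really a repackaging of that induction behind a cited black box; you correctly flag the bigraded step as the technical crux, and it does hold. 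Part (4) is the paper's own argument: realise the character $(0,\sqrt2,\lambda)$ in $SU(2)$, and your explicit matrices do this correctly for $|\lambda|\le\sqrt2$.
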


\begin{proof}
For the first three statements we argue by induction on $k$, making extensive use of the standard trace identity
\begin{equation}\label{trid}
Trace(UV)+Trace(UV^{-1})=Trace(U)\cdot Trace(V).
\end{equation}
Thus in particular $\tau_{xy^2}(\lambda)=Trace(XY)\cdot Trace(Y)-Trace (X)=\sqrt2\lambda$ and $\tau_{xy^3}(\lambda)=Trace(XY^2)Trace(Y)-Trace(XY)=\lambda$. So the result holds for $k=1$.  It trivially holds for $k=0$ since $\tau_1(\lambda)=Trace(I)=2$.

For the inductive step, we write $W=UV$ where $U=xy^{\a(1)}\cdots xy^{\a(k-1)}$ and $V=xy^{\a(k)}$.  Then $UV^{-1}$ is conjugate in $\Z_2*\Z_4$ to a word of length parameter less than $k$ but congruent to $k$ modulo $2$.  So the result follows from \eqref{trid} by applying the inductive hypothesis to $U$ and to $UV^{-1}$.

For the final statement we choose matrices $X,Y\in SU(2)$ of traces $0,\sqrt2$ respectively, such that $XY$ has trace $\lambda$.  Then $W(X,Y)\in SU(2)$ so has real trace in $[-2,2]$.
\end{proof}

\begin{cor}\label{tauWform}
Let $exp_y(W)$ denote the exponent-sum of $y$ in $W$.
If $G$ has no non-abelian free subgroups, then $\tau_W(\lambda)$ has the form
\begin{eqnarray*}
\sqrt2^a\lambda^b(\lambda^2-1)^c\qquad\mathrm{if}~exp_y(W)~\mathrm{is~odd;}\\
\sqrt2^a\lambda^b(\lambda^2-2)^c\qquad\mathrm{if}~exp_y(W)~\mathrm{is~even}
\end{eqnarray*}
for some natural numbers $a,b,c$ with $a$ as in Lemma \ref{parity}.
\end{cor}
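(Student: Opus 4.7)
The plan is to combine the structural information from Lemma \ref{parity} with Rosenberger's classification of $\P$-subgroups and a short parity analysis. A root $\lambda_0$ of $\tau_W$ corresponds to an essential $\P$-representation $\rho$ of $G$ with $Tr(\rho(xy))=\lambda_0$; since any non-abelian free subgroup of $\rho(G)$ lifts to one of $G$ (via pre-images of a free pair of generators), our hypothesis forces $\rho(G)$ into one of Rosenberger's types (a), (b), (c).

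I would first show that every root of $\tau_W$ lies in $\{0,\pm 1,\pm\sqrt2\}$. For type (a), the Fricke identity
$$Tr([X,Y])=Tr(X)^2+Tr(Y)^2+Tr(XY)^2-Tr(X)Tr(Y)Tr(XY)-2$$
evaluated at $Tr(X)=0,Tr(Y)=\sqrt2,Tr(XY)=\lambda$ gives $Tr([X,Y])=\lambda^2$, so reducibility is equivalent to $\lambda^2=2$. For type (b), $\rho(y)$ (of order $4$) must lie in the abelian index-$2$ subgroup $\Delta$, while $\rho(x)\notin\Delta$ (otherwise $\rho(G)\subseteq\Delta$ is abelian, contradicting irreducibility), so $\rho(xy)\in\rho(G)\setminus\Delta$ has order $2$, giving $\lambda=0$. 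For type (c) the presence of an order-$4$ element forces $\rho(G)\cong S_4$; in $S_4$ the product of a $4$-cycle and an involution has order $2$, $3$ or $4$, and the order-$4$ case would force $\rho(x)=\rho(y)^2$ and hence a cyclic image, contradicting type (c). So type (c) contributes only $\lambda\in\{0,\pm 1\}$.

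Next I would exclude values incompatible with the parity of $exp_y(W)$. At $\lambda=\pm\sqrt2$, $\rho$ is reducible with $\rho(G)=\<\rho(y)\>$ cyclic of order $4$ and $\rho(x)=\rho(y)^2$, so $\rho(W)=\rho(y)^{2k+exp_y(W)}$ has order $2$ in $\P$ only when $2k+exp_y(W)\equiv 2\pmod 4$, which forces $exp_y(W)$ to be even. Hence $\pm\sqrt2$ cannot be roots when $exp_y(W)$ is odd. At $\lambda=\pm 1$, $\rho$ is of type (c) and $G$ has $S_4$ as a homomorphic image, so when $exp_y(W)$ is even Corollary \ref{Honto} produces a non-abelian free subgroup of $G$, contradicting the hypothesis.

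To finish, I would invoke Lemma \ref{parity}: $\tau_W$ has degree $k$ with leading coefficient $\sqrt2^a$ and parity matching that of $k$, so $-\lambda_0$ is a root of the same multiplicity as $\lambda_0$. Combined with the restricted root sets above, $\tau_W$ must factor as $\sqrt2^a\lambda^b(\lambda^2-1)^c$ when $exp_y(W)$ is odd, or as $\sqrt2^a\lambda^b(\lambda^2-2)^c$ when $exp_y(W)$ is even, with $b+2c=k$. The main obstacle is handling the type (c) case: converting a type (c) character value into a non-abelian free subgroup of $G$ requires the Benyash-Krivets index-$2$ subgroup trick packaged in Corollary \ref{Honto}, which is only available when $exp_y(W)$ is even; the other exclusions reduce to short explicit computations.
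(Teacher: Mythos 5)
Your proposal is correct and takes essentially the same route as the paper: Rosenberger's classification confines the roots of $\tau_W$ to $\{0,\pm1,\pm\sqrt2\}$, Lemma \ref{parity} supplies the leading coefficient and the $\pm$-symmetry of the roots, type (a) is ruled out for odd $exp_y(W)$ by the reducible (diagonal) trace computation, and type (c) is ruled out for even $exp_y(W)$ via Corollary \ref{Honto}. The paper phrases this last exclusion through Corollary \ref{notac}, whose proof is precisely your direct appeal to Corollary \ref{Honto}, so the underlying argument is the same, with your version merely spelling out details (the commutator trace identity, the $S_4$ product analysis) that the paper asserts.
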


\begin{proof}
Since $G$ has no non-abelian free subgroups, Rosenberger's classification shows that any essential $\P$-representation has image of type (a), (b) or (c).  A representation of type (a) corresponds to roots $\pm\sqrt2$ of $\tau_W$, one of type (b) to roots $0$, and one of type (c) to roots $\pm1$ -- since its image has an element of order $4$ and so cannot be isomorphic to $A_4$ or $A_5$.  By Lemma \ref{parity} $\tau_W$ is either odd or even so $+\sqrt2$ and $-\sqrt2$ occur with equal multiplicity, as do $+1$ and $-1$.
Moreover, $\pm\sqrt2$ and $\pm1$ cannot both occur as roots of $\tau_W$ by Lemma \ref{notac}. To complete the proof, note that the leading coefficient of $\tau_W(\lambda)$ is given by Lemma \ref{parity}.
\end{proof}

\begin{cor}\label{OKmod2}
If the number $a$ in Lemma \ref{parity} is even, and $W'$ is obtained from $W$ by changing a $y$-letter to a $y^3$-letter, or vice versa, then $\tau_{W'}(\lambda)\equiv\tau_W(\lambda)$ modulo $2$.
\end{cor}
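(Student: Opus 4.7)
The plan is to exploit the Cayley--Hamilton identity for $Y\in SL(2,\C)$ with $Tr(Y)=\sqrt{2}$, which gives $Y^2 - \sqrt{2}\,Y + I = 0$, equivalently $Y - Y^3 = \sqrt{2}\,I$. Writing $W = P\cdot XY^{a(i)}\cdot S$ at the altered position (with $a(i)\in\{1,3\}$) and applying this identity to the central $Y^{a(i)}$ immediately yields
\[
\tau_W(\lambda) - \tau_{W'}(\lambda) \;=\; \pm\sqrt{2}\, Tr(P\,X\,S).
\]
So the task reduces to verifying that $Tr(PXS)\in\sqrt{2}\,\Z[\lambda]$, for then $\pm\sqrt{2}\,Tr(PXS)\in 2\Z[\lambda]$ as required.

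Next I reduce $PXS$ to a standard cyclic word $W''_{\mathrm{std}}$ using $X^2 = -I$ and $Y^4 = -I$, and then apply Lemma \ref{parity}(3). Using cyclicity of $Tr$ to place the altered position in the interior, a first $X^2$ merge fuses three letters $xy^{a(i-1)}\cdot x\cdot xy^{a(i+1)}$ into $\pm xy^{a(i-1)+a(i+1)}$. If the merged exponent equals $4$, applying $Y^4 = -I$ and another $X^2 = -I$ cascades outward, absorbing the letters $xy^{a(i-2)}, xy^{a(i+2)}$ and producing a new merged exponent $a(i-2) + a(i+2)$. One iterates this $Y^4$-cascade until the current merged exponent lies in $\{2,3,5,6\}$; the sign-flips arising from $y^5 = -y$ and $y^6 = -y^2$ collect into a single overall sign on $W''_{\mathrm{std}}$, which is irrelevant for membership in $\sqrt{2}\,\Z[\lambda]$.

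By Lemma \ref{parity}(3), the inclusion $\tau_{W''_{\mathrm{std}}}\in\sqrt{2}\,\Z[\lambda]$ reduces to showing the $y^2$-count of $W''_{\mathrm{std}}$ is odd. The altered letter $xy^{a(i)}$ itself contributes no $y^2$. Each cascade pair $\{a(i-j),a(i+j)\}$ for $j = 1,\ldots,m$ has sum $4$, hence is $(2,2)$ or $\{1,3\}$, contributing an even number of $y^2$'s in either case. The terminating pair $\{a(i-m-1),a(i+m+1)\}$ together with the new merged letter change the count by $s - t$, where $s,t\in\{0,1\}$ indicate whether the new letter, respectively the terminating pair, contains a $y^2$; a short check of the four terminating sums $2,3,5,6$ shows $s \ne t$ in every case. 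Hence the total change in $y^2$-count is odd, and since $a$ is even by hypothesis, $W''_{\mathrm{std}}$ has odd $y^2$-count, completing the argument.

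The principal combinatorial obstacle is the parity case-check for the four terminating outcomes of the cascade. A secondary technicality is the possibility of the cascade wrapping cyclically around a short $W$, in which case $W''_{\mathrm{std}}$ may degenerate to something not of the standard form $xy^{a(1)}\cdots xy^{a(k')}$; such degenerate reductions (for instance to $\pm y^c$ or $\pm x$) have trace polynomials that manifestly lie in $\sqrt{2}\,\Z[\lambda]$ -- typically $0$ or $\pm\sqrt{2}$ -- so the argument survives.
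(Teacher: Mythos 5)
Your argument is correct, but it takes a genuinely different (and considerably longer) route than the paper. You use the difference identity $Y-Y^{3}=\sqrt2\,I$ coming from Cayley--Hamilton, which deletes the altered syllable and leaves you with $PXS$ -- a word that is no longer in the standard form $xy^{a(1)}\cdots xy^{a(k)}$, so you are forced into the cyclic-reduction cascade, the parity bookkeeping for the terminating pair, and the wrap-around cases. The paper instead uses the sum identity, $Y+Y^{3}=\sqrt2\,Y^{2}$ (equivalently, the trace identity \eqref{trid} applied to $W=Vy^{-1}$, $W'=Vy$ with $V:=Uxy^{2}$): this replaces the altered letter by $y^{2}$ rather than deleting it, so the auxiliary word $V$ is already in standard form with exactly $a+1$ (odd) $y^{2}$-letters, and Lemma \ref{parity}(3) gives $\tau_W+\tau_{W'}=\sqrt2\,\tau_V\in 2\Z[\lambda]$ in two lines. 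What your approach buys is a completely explicit description of the reduced word $PXS$, at the cost of the case analysis; what the paper's buys is brevity, since the whole combinatorial content of your cascade is absorbed by choosing the auxiliary word so that no reduction is needed. One small point in your write-up deserves tightening: in the degenerate (fully wrapped) cases you say the reduction lands ``for instance'' on $\pm y^{c}$ or $\pm x$ with trace ``typically'' $0$ or $\pm\sqrt2$; this needs the (true, but unproved in your sketch) observation that the outcome $\pm I$, whose trace $\pm2$ does \emph{not} lie in $\sqrt2\,\Z[\lambda]$, can never occur -- the two arms of the cascade always leave behind either the central $x$ (when $k$ is odd) or the antipodal syllable $y^{a(i+k/2)}$ with exponent in $\{1,2,3\}$ (when $k$ is even), so the surviving trace is indeed $0$ or $\pm\sqrt2$.
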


\begin{proof}
Up to conjugacy, and interchanging $W,W'$, we may assume that $W,W'$ have the form $Uxy$ and $Uxy^3$ respectively, for some common subword $U$.  Writing $V:=Uxy^2$ we have $W=Vy^{-1}$ and $W'=Vy$, and $\tau_V(\lambda)\in\sqrt2\Z[\lambda]$ by Lemma \ref{parity}, so the standard trace identity \eqref{trid} gives
$$\tau_W(\lambda)+\tau_{W'}(\lambda)=\tau_y(\lambda)\cdot\tau_V(\lambda)=\sqrt2\cdot\tau_V(\lambda)\in 2\Z[\lambda].$$
\end{proof}

\begin{cor}\label{2ysquareds}
If $a=2$, and $\a(m)=\a(n)=2$ where $1\le m<n\le k$, then the following are equivalent:
\begin{enumerate}
\item $\tau_W(\lambda)\in 2\Z[\lambda]$;
\item  $k=2(n-m)$;
\item $W$ belongs to the normal closure of $y^2$ in $\<x,y~|~x^2=y^4=1\>$.
\end{enumerate}
\end{cor}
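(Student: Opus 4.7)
The plan is to prove the chain $(3)\Leftrightarrow(2)\Leftrightarrow(1)$: the first equivalence is a short direct computation inside $\Z_2\ast\Z_2$, and the second is a parity calculation for $\tau_W(\lambda)$ using the trace identity \eqref{trid}, after normalising $W$ via Corollary~\ref{OKmod2}.

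For $(3)\Leftrightarrow(2)$, I would consider the natural projection
$\pi:\<x,y\mid x^2=y^4=1\>\onto\<x,\ol y\mid x^2=\ol y^2=1\>\cong\Z_2\ast\Z_2$
obtained by killing $y^2$. Each syllable $xy^{a(j)}$ of $W$ maps to $x\ol y$ when $a(j)\in\{1,3\}$ and to $x$ when $a(j)=2$, so
\[\pi(W) = (x\ol y)^{m-1}\cdot x\cdot (x\ol y)^{n-m-1}\cdot x\cdot (x\ol y)^{k-n}.\]
The elementary identity $x(x\ol y)^d x=(\ol y x)^d$ absorbs the two isolated $x$'s, and writing $u:=x\ol y$ (which has infinite order in $\Z_2\ast\Z_2\cong D_\infty$ and satisfies $\ol y x=u^{-1}$), this collapses to $\pi(W)=u^{k-2(n-m)}$. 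Hence $\pi(W)=1$ -- equivalently, $W$ lies in the normal closure of $y^2$ -- if and only if $k=2(n-m)$.

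For $(2)\Leftrightarrow(1)$, both conditions are insensitive to swapping individual $y$-letters for $y^3$-letters: condition (2) trivially, and condition (1) by Corollary~\ref{OKmod2}. So we may assume $a(j)=1$ for every $j\notin\{m,n\}$, whence $W=(xy)^{m-1}\cdot xy^2\cdot (xy)^{n-m-1}\cdot xy^2\cdot (xy)^{k-n}$. Setting $A:=XY$ and $B:=Y$ and using cyclic invariance,
\[\tau_W(\lambda)=Tr\bigl(A^\alpha B A^\beta B\bigr) \qquad\text{with}\qquad \alpha:=k-n+m,\ \beta:=n-m.\]
Applying \eqref{trid} with $U=A^\alpha B$ and $V=A^\beta B$, and noting $UV^{-1}=A^{\alpha-\beta}=A^{k-2(n-m)}$, this yields
\[\tau_W(\lambda)=Tr(A^\alpha B)\cdot Tr(A^\beta B)-Tr\bigl(A^{k-2(n-m)}\bigr).\]

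To finish, I would use a short parity count. A Chebyshev-style induction from $Tr(B)=\sqrt2$ and $Tr(AB)=\sqrt2\lambda$ shows $Tr(A^iB)\in\sqrt2\,\Z[\lambda]$ for all $i\ge 0$, so the product term lies in $2\Z[\lambda]$. On the other hand, $Tr(A^n)\in\Z[\lambda]$ is monic of degree $n$ when $n\ge 1$ and equals $2$ when $n=0$, so it vanishes modulo $2$ only in the degenerate case $n=0$. Hence $\tau_W(\lambda)\equiv 0\pmod 2$ exactly when $k-2(n-m)=0$, completing $(2)\Leftrightarrow(1)$. The main obstacle here is no deep point but rather the bookkeeping -- the cyclic rearrangement and the signs and inverses in \eqref{trid}; once Corollary~\ref{OKmod2} has normalised $W$, everything drops out of a single application of the trace identity.
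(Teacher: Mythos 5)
Your proposal is correct and follows essentially the same route as the paper: after the Corollary~\ref{OKmod2} normalisation, your cyclic rearrangement and single application of \eqref{trid} with $U=A^{\alpha}B$, $V=A^{\beta}B$ is exactly the paper's decomposition $W=UV$ with $U=(xy)^{m-1}xy^2$, $V=(xy)^{k-m-1}xy^2$ (the paper normalises $n=k$ up to conjugacy instead of invoking cyclic invariance of the trace), and the parity conclusion via $\tau_U,\tau_V\in\sqrt2\Z[\lambda]$ and the monic trace of a nonzero power of $xy$ is the same. Your dihedral-projection computation for $(3)\Leftrightarrow(2)$ simply writes out the ``easy exercise'' $WN=(xy)^{2m-k}N$ that the paper leaves to the reader.
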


\begin{proof}
Up to conjugacy, we may assume that $n=k$ and $m\ge k/2$.  By Corollary \ref{OKmod2}, replacing a letter $y$ by $y^3$ or vice versa does not affect the trace polynomial modulo $2$.  Nor does it change the coset $WN$ where $N$ is the normal closure of $y^2$.

So we may assume that $\a(j)=1$ for $j\notin\{m,n\}$, 
so $W=UV$ with $U=(xy)^{m-1}xy^2$ and $V=(xy)^{k-m-1}xy^2$.  The standard trace identity \eqref{trid} then gives 
$$\tau_W(\lambda)=\tau_U(\lambda)\cdot\tau_V(\lambda)-\tau_{UV^{-1}}(\lambda).$$
But $\tau_U(\lambda),\tau_V(\lambda)\in\sqrt2\Z[\lambda]$, while $UV^{-1}=(xy)^{2m-k}$.  It follows that $\tau_W(\lambda)\in 2\Z[\lambda]$ if and only if $U=V$ if and only if $2m=k$, as required.

An easy exercise shows that $WN=(xy)^{2m-k}N$, so $W\in N$ if and only if $2m=k$, as required.
\end{proof}

Benyash-Krivets \cite{BK07} proved a special sub-case of the Rosenberger Conjecture using a parity argument.  For completeness we include here a slightly different proof, also using a parity argument, but based on Corollary \ref{OKmod2}.

\begin{thm}\label{oddoddthm}
Let $G=\<x,y|x^2=y^4=W(x,y)^2=1\>$ be a generalised triangle group of type $(2,4,2)$ in which the exponent sums of $x,y$ respectively in $W$ are both odd, and in which the length parameter $k$ is greater than $3$.  Then $G$ contains a non-abelian feee subgroup.
\end{thm}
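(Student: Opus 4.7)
The plan is to combine Corollary~\ref{tauWform} (which restricts the form of $\tau_W$), Lemma~\ref{parity}(4) (an upper bound on $\tau_W$ over $[-\sqrt 2,\sqrt 2]$), and Corollary~\ref{OKmod2} (which, when $a$ is even, lets one exchange $y$-letters and $y^3$-letters without altering $\tau_W\pmod 2$), so as to force an impossible polynomial congruence modulo~$2$. To set up, since every syllable of $W$ carries exactly one $x$, the exponent-sum of $x$ equals $k$, and the hypothesis forces $k$ odd. A $y^2$-letter contributes $0$ mod $2$ to the exponent-sum of $y$, so the hypothesis that this sum is odd is equivalent to the count $k-a$ of $\{y,y^3\}$-letters being odd, which given $k$ odd forces $a$ (the number of $y^2$-letters) to be even.

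Next, suppose for contradiction that $G$ has no non-abelian free subgroup. Corollary~\ref{tauWform} then writes $\tau_W(\lambda) = \sqrt 2^a\lambda^b(\lambda^2-1)^c$ with $b+2c=k$, and Lemma~\ref{parity}(1) forces $\tau_W$ to be odd in $\lambda$ (because $k$ is odd), so $b$ is odd and at least $1$. Evaluating the factored form at $\lambda=\sqrt 2$ yields $\tau_W(\sqrt 2)=\sqrt 2^{a+b}$, and Lemma~\ref{parity}(4) demands $|\tau_W(\sqrt 2)|\le 2$, so $a+b\le 2$. The only pair with $a$ even and $b$ odd positive is $(a,b)=(0,1)$, forcing $c=(k-1)/2$ and
$$\tau_W(\lambda) = \lambda(\lambda^2-1)^{(k-1)/2}.$$

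Since $a=0$, iterated applications of Corollary~\ref{OKmod2} replace every $y^3$-letter of $W$ by a $y$-letter without changing $\tau_W\pmod 2$, reducing $W$ to $(xy)^k$. Let $p_k(\lambda)$ denote the trace polynomial of $(xy)^k$; Cayley--Hamilton yields the recursion $p_{n+1}=\lambda p_n - p_{n-1}$ with $p_0=2$ and $p_1=\lambda$, and we conclude
$$p_k(\lambda) \equiv \lambda(\lambda+1)^{k-1}\pmod 2.$$
To refute this for $k\ge 5$ odd, I work in an algebraic closure of $\Z/2\Z$. The roots of $p_k$ there are precisely the values $\xi+\xi^{-1}$ as $\xi$ runs over solutions of $\xi^{2k}=1$, which in characteristic $2$ coincide with the $k$-th roots of unity. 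Such a value equals $0$ only when $\xi=1$, and equals $1$ only when $\xi^2+\xi+1=0$, i.e.\ when $\xi$ is a primitive cube root of unity. The congruence above would therefore require every $k$-th root of unity to have order dividing $3$; since $k$ is odd this forces $k\in\{1,3\}$, contradicting $k>3$.

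The main obstacle will be the second step, which pins $(a,b,c)$ down uniquely via the analytic bound of Lemma~\ref{parity}(4); without that device one would have to handle each nonzero even $a$ separately (Corollary~\ref{2ysquareds} covers $a=2$, but nothing comparable seems available for $a\ge 4$). Once that reduction is in hand, the mod-$2$ comparison with $p_k$ and the root analysis in the algebraic closure of $\Z/2\Z$ are essentially routine.
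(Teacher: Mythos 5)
Your argument is correct, and its first half coincides with the paper's own proof: both use Corollary \ref{tauWform} together with the parity of $a$ and the bound $|\tau_W(\sqrt2)|\le 2$ from Lemma \ref{parity} to pin down $\tau_W(\lambda)=\lambda(\lambda^2-1)^{(k-1)/2}$, and then apply Corollary \ref{OKmod2} repeatedly to conclude $\tau_W\equiv\tau_{(xy)^k}\pmod 2$. Where you genuinely diverge is in how the contradiction is extracted from that congruence. The paper compares the parities of the coefficients of $\lambda^{k-2}$, $\lambda^{k-4}$ and $\lambda^{k-6}$ on the two sides (first deducing $k\equiv 3$ modulo $4$, hence $k\ge 7$) and shows the resulting three simultaneous congruences in $c$ have no solution. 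You instead pass to $\overline{\mathbb{F}}_2[\lambda]$ and use the parametrisation $p_k(\xi+\xi^{-1})=\xi^k+\xi^{-k}$: every $k$-th root of unity $\xi$ yields a root $\xi+\xi^{-1}$ of $p_k$ modulo $2$, while $\lambda(\lambda+1)^{k-1}$ has only the roots $0$ and $1$, which correspond exactly to $\xi=1$ and to primitive cube roots of unity; since $\mu_k(\overline{\mathbb{F}}_2)$ has order exactly $k$ for odd $k$, this forces $k\mid 3$, contradicting $k>3$. Your finish is arguably cleaner and more conceptual: it disposes of all odd $k>3$ in one stroke and explains structurally why only $k=1,3$ survive, whereas the paper's coefficient computation is more elementary but requires an explicit (if easy) verification that the congruence system is unsolvable. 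The one point you should state explicitly is the identity $p_n(\xi+\xi^{-1})=\xi^n+\xi^{-n}$ over $\overline{\mathbb{F}}_2$ (immediate by induction from the recursion $p_{n+1}=\lambda p_n-p_{n-1}$), since the root description rests on it; with that made explicit, the proof is complete.
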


\begin{proof}
Since $exp_y(W)$ is odd, we may assume that 
$\tau_W(\lambda)$ has the form $\sqrt2^a\lambda^b(\lambda^2-1)^c$ for some natural numbers $a,b,c$, by Corollary \ref{tauWform}.  Here $k=b+2c$ is odd, and $a$ is the number of $y^2$-letters in $W$, which is even since $y$ has odd exponent sum in $W$.  Finally, $\sqrt2^{a+b}=\tau_W(\sqrt2)\le 2$ by Lemma \ref{parity}, so $a+b\le 2$.  It follows that $a=0$ and $b=1$, so $\tau_W(\lambda)=\lambda(\lambda^2-1)^c$ and $k=2c+1$.

\medskip
Since $a=0$, $\tau_W(\lambda)$ is congruent modulo $2$ to $\tau_V(\lambda)$ where $V=(xy)^{2c+1}$, by Corollary \ref{OKmod2}.  In particular, the coefficient $c$ of $\lambda^{k-2}$ in $\tau_W(\lambda)$ is congruent modulo $2$ to that of $\tau_V(\lambda)$ which is $2c+1$.  So $c$ is odd and $k=2c+1\equiv 3$ modulo $4$.  But then $k\ge 7$ since $k>3$.  Continuing in the same way, we can compare the parities of the coefficients of $\lambda^{k-4}$ and $\lambda^{k-6}$ in $\lambda(\lambda^2-1)=\tau_W(\lambda)$ and $\tau_V(\lambda)$.  A calculation gives a triple of simultaneous congruences:
$$c\equiv 2c+1~~\mathrm{modulo}~~2; ~~\frac{c(c-1)}2\equiv\frac{(2c+1)(2c-2)}2~~\mathrm{modulo}~~2;$$
$$\frac{c(c-1)(c-2)}6\equiv\frac{(2c+1)(2c-3)(2c-4)}6~~\mathrm{modulo}~~2.$$
It is an easy exercise to show that this set of simultaneous congruences has no solution.
\end{proof}

\section{Largeness methods}\label{largeness}

Recall that a group $G$ is {\em large} if it has a finite-index subgroup that has a free homomorphic image of rank at least $2$.
In particular, every large group contains a free subgroup of rank $2$ (but not conversely: for example Higman's famous $4$-generator, $4$-relator group with no non-trivial finite images \cite{Hi} is constructed as a free product amalgamated over a free group of rank $2$).   Various criteria for largeness exist.  We shall use the following. 

\begin{lem}\label{large}
Let $$G=\<x_1,\dots,x_m~|~ r_1=r_2=\cdots=r_n=r_{n+1}^p=\cdots=r_{n+q}^p\>$$
where $p$ is a prime number, $r_1,\dots,r_{n+q}$ are words in $\{x_1,\dots,x_m\}$ and $m>n+1$.  If $G/[G,G]$ is infinite then $G$ is large.
\end{lem}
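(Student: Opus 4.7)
\begin{spf}
The plan is to pass to a finite-index normal subgroup $K\le G$ whose Reidemeister--Schreier presentation enjoys sufficient ``non-power'' deficiency, and then to apply an extension of the Baumslag--Pride largeness theorem that discounts proper $p$-th power relators.

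Since $G/[G,G]$ is infinite there is an epimorphism $\phi:G\onto\Z$. Each $r_{n+j}$ has order dividing the prime $p$ in $G$, so $\phi(r_{n+j})\in\Z$ is $p$-torsion, hence zero; thus $r_{n+j}\in\ker\phi$ for $j=1,\ldots,q$. For an integer $d\ge 1$ set $K=K_d:=\ker(G\onto\Z/d\Z)$, a normal subgroup of index $d$, which necessarily contains every $r_{n+j}$.

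A Reidemeister--Schreier analysis, relative to a transversal $T$ of cardinality $d$, presents $K_d$ on $d(m-1)+1$ Schreier generators with defining relators of two kinds: for each $i\le n$ and $t\in T$ an ordinary relator obtained by rewriting $tr_it^{-1}$ ($dn$ in total); and for each $n<j\le n+q$ and $t\in T$ a proper $p$-th power relator of the form $(tr_{n+j}t^{-1})^p$ (using that $r_{n+j}\in K_d$ together with normality of $K_d$, so $tr_{n+j}t^{-1}\in K_d$). Ignoring the power relators, the effective deficiency of this presentation is
\[
d(m-1)+1-dn \;=\; d(m-n-1)+1 \;\ge\; 2,
\]
using the hypothesis $m>n+1$.

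It remains to invoke an extension of the Baumslag--Pride theorem: any finitely presented group $H$ with $H/[H,H]$ infinite and admitting a presentation in which the generator count exceeds the non-power relator count by at least $2$ is large. Since $K_d/[K_d,K_d]$ surjects onto the finite-index subgroup $d\Z\cong\Z$ of $\Z$, it is infinite; the extended criterion then gives that $K_d$ is large, whence so is $G$. I expect the principal obstacle to be the justification of this extension, rather than any of the Reidemeister--Schreier book-keeping: the key point is that proper $p$-th power relators vanish in $\mathbb{F}_p$-homology and, after iteration of Reidemeister--Schreier through a cascade of $p$-fold covers, cyclically permute under the deck action and so can be absorbed into additional Schreier generators without affecting the effective deficiency that drives the Baumslag--Pride argument.
\end{spf}
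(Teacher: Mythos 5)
There is a genuine gap: the entire content of the lemma is deferred to an unproved ``extension of the Baumslag--Pride theorem'' which is essentially a restatement of the lemma itself. Indeed, if one grants your claimed criterion (infinite abelianisation plus generator count exceeding the non-power relator count by at least $2$ implies largeness), then applying it directly to the given presentation of $G$ -- which has $m$ generators, $n$ non-power relators, $m-n\ge 2$, and infinite abelianisation -- finishes the proof at once; the Reidemeister--Schreier passage to $K_d$ buys nothing and merely reproduces the same unproved discounting of $p$-th power relators one level down. Your closing sketch of why the extension should hold does not work as described: the lifts $(tr_{n+j}t^{-1})^p$ to a $p$-fold cover are still genuine $p$-th power relators, they are not consequences of the remaining relators, and they cannot simply be ``absorbed into additional Schreier generators'' -- iterating covers multiplies both the ordinary and the power relators in step with the generators, so no bookkeeping of this kind, by itself, establishes largeness.

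The paper closes exactly this gap by a different mechanism, which is where the real work lies. Fix an epimorphism $\psi:G\twoheadrightarrow\langle t\rangle\cong\mathbb{Z}$ (your observation that each $r_{n+j}$ maps to $0$ is the correct starting point) and form the mod $p$ Alexander matrix over $\Lambda=\mathbb{Z}_p[t,t^{-1}]$ via Fox derivatives. The Fox derivative of $r_{n+j}^p$ is $(1+r_{n+j}+\cdots+r_{n+j}^{p-1})\,\partial r_{n+j}/\partial x_i$, and since $r_{n+j}\in\ker\psi$ this maps to $p\cdot(\cdot)=0$ in $\Lambda$; hence every column corresponding to a power relator vanishes, at most $n<m-1$ columns are nonzero, all $(m-1)\times(m-1)$ minors vanish, and the mod $p$ Alexander polynomial is zero. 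Largeness then follows from Button's theorem (Theorem 2.1 of the cited paper of Button). In other words, the precise sense in which ``$p$-th power relators vanish in $\mathbb{F}_p$-homology'' has to be implemented through the vanishing of the mod $p$ Alexander polynomial together with a quotable largeness criterion such as Button's; without that (or an equivalent citation), your argument is circular at its key step.
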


\begin{proof}

Let $\<t\>$ be an infinite cyclic group, and $\Lambda=\Z_p[t,t^{-1}]$ be its group ring over the field $\Z_p$, in other words the ring of Laurent polynomials in one variable with $\Z_p$-coefficients.  Let $\psi:G\twoheadrightarrow\<t\>$ be an epimorphism.

Then the modulo $p$ {\em Alexander matrix} of this presentation of $G$ (with respect to $\psi$) is the $\Lambda$-matrix whose $i,j$ entry is the Fox derivative of the $j$'th relation of the presentation with respect to the  $i$'th generator, evaluated in $\Lambda$ via the natural ring epimorphism $\Z G\twoheadrightarrow\Lambda$ induced from $\psi$.  The modulo $p$ {\em Alexander polynomial} of $G$ (with respect to $\psi$) is the greatest common divisor of the $(m-1)\times(m-1)$ minors of this matrix.  It is defined only up to multiplication by trivial units in $\Lambda$, but is otherwise independent of the presentation of $G$.

In our example, several of the relators are $p$-th powers of words which represent elements of the kernel of $\psi$.  Any Fox derivative of such a relator is a multiple of $p$ and so vanishes in $\Lambda$.   Hence our modulo $p$ Alexander matrix has at most $n<m-1$ non-zero columns, so its $(m-1)\times(m-1)$ minors are all zero.  Hence the modulo $p$ Alexander polynomial is also zero. Largeness of $G$ then follows from a result of Button \cite[Theorem 2.1]{Bu}.
\end{proof}

We will apply this result in two different subcases of the Rosenberger Conjecture.

\begin{thm}\label{twotwosthm}
Let $G=\<x,y|x^2=y^4=W(x,y)^2=1\>$ be a generalised triangle group of type $(2,4,2)$, where two or more $y^2$ letters occur in $W$.  Then $G$ contains a non-abelian free subgroup.
\end{thm}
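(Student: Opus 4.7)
Suppose for contradiction that $G$ has no non-abelian free subgroup, and let $a \ge 2$ denote the number of $y^2$-letters in $W$. By Corollary \ref{tauWform}, $\tau_W(\lambda)$ has one of the two forms $\sqrt{2}^a\lambda^b(\lambda^2-1)^c$ (if $\exp_y(W)$ is odd) or $\sqrt{2}^a\lambda^b(\lambda^2-2)^c$ (if $\exp_y(W)$ is even). In the odd case, Lemma \ref{parity}(4) at $\lambda = \sqrt{2}$ gives $\sqrt{2}^{a+b} \le 2$, forcing $a = 2$, $b = 0$, and so $k = 2c$ is even; combined with $\exp_y(W) \equiv k - a \pmod 2$ being odd, which requires $k$ odd, this yields an immediate contradiction. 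In the even case, Lemma \ref{parity}(4) at $\lambda = 1$ gives $2^{a/2} \le 2$, so again $a = 2$; then $\tau_W(\lambda) = 2\lambda^b(\lambda^2-2)^c \in 2\Z[\lambda]$, and Corollary \ref{2ysquareds} forces $k = 2(n-m)$, where $m < n$ are the positions of the two $y^2$-letters.

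To derive a contradiction in the remaining case, I would apply Lemma \ref{large} to the index-$2$ subgroup $L := \ker(G \onto \Z/2)$ defined by sending both $x$ and $y$ to the generator of $\Z/2$. A Reidemeister--Schreier computation with transversal $\{1, y\}$ shows that $L$ is generated by $\alpha := xy^{-1}$ and $\beta := y^2$, and that the $G$-relators $x^2, y^4, W^2$ and their conjugates by $y$ rewrite (up to trivial relations) as $\beta^2$, $W_L^2$ and $(W'_L)^2$, where $W_L, W'_L$ denote the rewrites of $W$ and $yWy^{-1}$ respectively. Thus $L$ has a presentation with $m = 2$ generators and three square relators ($n = 0$, $q = 3$, $p = 2$), satisfying the inequality $m > n + 1$ required by Lemma \ref{large}.

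It remains to show that $L^{ab}$ is infinite, which reduces to the calculation $\exp_\alpha(W_L) = 0$ (and likewise for $W'_L$). Careful tracking of cosets in the Reidemeister--Schreier rewrite shows that each $x$-letter of $W$ contributes $\alpha^{\pm 1}$ to $W_L$ with sign determined by the parity of the number of $y^2$-letters preceding it; summing over the $k$ occurrences of $x$, with two $y^2$-letters at positions $m < n$, yields $\exp_\alpha(W_L) = m - (n-m) + (k-n) = k - 2(n-m)$, which vanishes by our constraint. An analogous computation (differing only by an overall sign from the initial $y$ shifting the starting coset) gives $\exp_\alpha(W'_L) = 0$. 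Consequently the relations from $W_L^2$ and $(W'_L)^2$ in $L^{ab}$ collapse to $2e_\beta\beta = 0$, automatic from $2\beta = 0$, so $L^{ab} \cong \Z\langle\alpha\rangle \oplus (\Z/2)\langle\beta\rangle$ is infinite. Lemma \ref{large} then gives that $L$ is large, and so is $G$, contradicting our assumption. The main technical hurdle is the coset bookkeeping yielding $\exp_\alpha(W_L) = k - 2(n-m)$; all other ingredients are either immediate from prior results or routine.
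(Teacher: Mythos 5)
Your proof is correct and takes essentially the same route as the paper: the trace-polynomial bounds force $a=2$ and dispose of the odd-exponent case, Corollary \ref{2ysquareds} is invoked, and Lemma \ref{large} is applied to the very same index-$2$ subgroup (your kernel of the total-parity map is the paper's normal closure of $y^2$ and $xy$) via its $2$-generator, three-square-relator Reidemeister--Schreier presentation. The only variation is in verifying infinite abelianisation: you use condition (2) of Corollary \ref{2ysquareds} ($k=2(n-m)$) and a direct exponent-sum count in the rewritten word, while the paper uses the equivalent condition (3) ($W$ lies in the normal closure of $y^2$), so that $G\onto\Z_2*\Z_2$ and the subgroup surjects onto $\Z$ -- a purely cosmetic difference.
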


\begin{proof}
We assume that $G$ has no non-abelian free subgroups, and derive a contradiction as follows.  By Lemma \ref{tauWform} $\tau_W(\lambda)$ has the form $\sqrt2^a\lambda^b(\lambda^2-1)^c$ if $exp_y(W)$ is odd, or $\sqrt2^a\lambda^b(\lambda^2-2)^c$ if $exp_y(W)$ is even, where $a,b,c$ are natural numbers, and $a\ge 2$ by Lemma \ref{parity}.

Suppose first that $exp_y(W)$ is odd.  Then $\sqrt2^{a+b}=\tau_W(\sqrt2)\le2$, by Lemma \ref{parity}. It follows that $a=2$ and $b=0$, whence $k=2c$ is even, and hence also $exp_y(W)$ is even, a contradiction.

Hence $exp_y(W)$ is even and the trace polynomial then has the form $\tau_W(\lambda)=\sqrt2^a\lambda^b(\lambda^2-2)^c$ with $a\ge 2$.  Moreover $\sqrt2^a=|\tau_W(1)|\le 2$ by Lemma \ref{parity}.  It follows that $a=2$ and $\tau_W(\lambda)=2\lambda^b(\lambda^2-2)^c\in 2\Z[\lambda]$.

Now by Corollary \ref{2ysquareds} it follows that $W$ belongs to the normal closure of $y^2$ in $\Z_2*\Z_4$, so $G\twoheadrightarrow \Z_2*\Z_2$.  If $H$ is the normal closure of $y^2$ and $xy$ in $G$, then $H$ has infinite abelianisation, and a presentation of the form
$$\<u,v~|~v^2=W_1^2=W_2^2=1\>,$$
where $v=y^2$, $u=xy$, and $\{W_1,W_2\}=\{W,xWx\}$, rewritten in terms of $u,v$.
The result now follows from Lemma \ref{large}.
\end{proof}

\begin{thm}\label{oddeventhm}
Let $G=\<x,y|x^2=y^4=W(x,y)^2=1\>$ be a generalised triangle group of type $(2,4,2)$, where $W$ has odd length parameter $k\ge 5$ and $y$ has even exponent sum in $W$.  Then $G$ contains a non-abelian free subgroup.
\end{thm}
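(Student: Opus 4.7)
The plan is to argue by contradiction, combining the reduction strategies of Theorems \ref{twotwosthm} and \ref{oddoddthm} with a largeness argument modeled on the proof of Theorem \ref{twotwosthm}.

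Suppose $G$ has no non-abelian free subgroup. First I would reduce to the case of exactly one $y^2$-letter. Theorem \ref{twotwosthm} already rules out $n_2 \ge 2$, where $n_i$ denotes the number of $y^i$-letters in $W$. Since $\exp_y(W) \equiv n_1 + n_3 \pmod 2$ is even while $k = n_1 + n_2 + n_3$ is odd, parity forces $n_2$ to be odd, hence $n_2 = 1$. This puts us in the setting $a = 1$ of Lemma \ref{parity}, and Corollary \ref{tauWform} yields
$$\tau_W(\lambda) = \sqrt{2}\,\lambda^b(\lambda^2-2)^c$$
with $b + 2c = k$ and $b$ odd; Lemma \ref{parity}(4) evaluated at $\lambda = \sqrt{2}$ then forces $c \ge 1$, since otherwise $|\tau_W(\sqrt{2})| = 2^{(k+1)/2}$ would exceed $2$ for $k \ge 5$.

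To derive the contradiction I would apply Lemma \ref{large} to the index-$2$ subgroup $G^+ := \ker(\pi)$ of $G$, where $\pi : G \to \Z_2$ sends both $x$ and $y$ to the nontrivial element. A Reidemeister--Schreier calculation presents $G^+$ on the generators $a := xy$ and $b := yx$ with defining relations $(ab)^2 = 1$ (coming from $y^4 = 1$) and a word $R = 1$ obtained from $W^2 = 1$. Using the symmetric placement of the single $y^2$-letter, a direct calculation shows that both abelianised relators lie in the diagonal sublattice $\Z\cdot(1,1)\subset \Z^2$; hence $(G^+)^{ab}$ has rank at least $1$ and is infinite.

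The principal obstacle is to recast the presentation of $G^+$ so that every relator is a $p$-th power for a prime $p$, as demanded by Lemma \ref{large}. The relator $(ab)^2$ is already a square, but $R$ is not obviously a proper power in the free group on $\{a,b\}$. To address this I would exploit the $\Z_2$-symmetry $\tau : a \leftrightarrow b$ of $G^+$ (the restriction of conjugation by $x$ in $G$), which interchanges the two ``halves'' of $R$ arising from the two copies of $W$ in $W^2$, so that $R = s \cdot \tau(s)$ for an explicit word $s$. Combined with the relation $(ab)^2 = 1$, I expect this to admit a Tietze transformation producing an equivalent presentation of $G^+$ in which all relators are squares. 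Lemma \ref{large} (with $m=2$, $n=0$, $q=2$, $p=2$) would then give largeness of $G^+$, whence $G$ itself contains a non-abelian free subgroup, contradicting the standing assumption.
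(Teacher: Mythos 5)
Your reduction to a single $y^2$-letter and your Reidemeister--Schreier description of the index-$2$ subgroup $G^+:=\ker(G\to\Z_2)$ are both correct: with $a=xy$, $b=yx$ one does get $G^+=\<a,b\mid (ab)^2=R=1\>$ where $R=s\cdot\tau(s)$ and $\tau$ is the swap $a\leftrightarrow b$ induced by conjugation by $x$, and the abelianisation of $G^+$ is indeed infinite. The gap is at the one step that carries all the weight. With only $m=2$ generators, Lemma \ref{large} requires $n=0$, i.e.\ \emph{every} relator must be a $p$-th power, and $R$ is not one: $R=s\tau(s)$ is a square only in the overgroup $G$ (it is the rewrite of $W^2$ with $W\notin G^+$), not in the free group on $\{a,b\}$. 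Your hope that a Tietze transformation, using $(ab)^2=1$, will produce an equivalent presentation in which all relators are squares is not backed by any construction, and there is no reason to expect one: each generator added by a Tietze move brings a new non-power relator with it, so the inequality $m>n+1$ is never gained that way, and no change of the two generators turns $s\tau(s)$ into a proper power in general. In effect you are asking largeness of a two-generator, two-relator group to follow from the symmetry of $R$ plus infinite abelianisation alone, which is not enough, and notably your largeness step makes no use of the trace-polynomial restrictions that the hypothesis ``no free subgroup'' provides.

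The paper circumvents exactly this obstruction by going deeper than index $2$. From $\tau_W(\lambda)=\sqrt2\,\lambda^b(\lambda^2-2)^c$ with $b$ odd, the bound $|\tau_W(1/\sqrt2)|\le2$ of Lemma \ref{parity} forces $b\ge3$; the root $\lambda=0$ gives an essential representation $\rho$ of $G$ onto the dihedral group of order $8$, and one passes to the index-$4$ subgroup $H=\rho^{-1}(\<\rho(x)\>)$. Because $H$ contains $W$ and $y^2Wy^{-2}$, its Reidemeister--Schreier presentation has four generators and only two relators that are not squares, so $m>n+1$ holds and Lemma \ref{large} applies --- provided $H$ has infinite abelianisation. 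That is the genuinely hard part, and it is where $b\ge3$ is used: $\rho$ is lifted to a representation into $PSL(2,\C[\lambda]/\<\<\lambda^2\>\>)$ and the image of $K=\ker\rho$ is analysed to show $K/[K,H]$ is infinite. None of this is replaced by anything in your proposal, so as written it does not prove the theorem; to repair it you would need either a concrete presentation of a suitable finite-index subgroup meeting the hypotheses of Lemma \ref{large}, or an independent proof of largeness of $G^+$.
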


\begin{proof}
Suppose that $G$ has no non-abelian free sugroups.  Then by Corollary \ref{tauWform}, $\tau_W(\lambda)=\sqrt2^a\lambda^b(\lambda^2-2)^c$ for some integers $a,b,c$ where $k=b+2c$ and $a$ is the number of $y^2$ letters in $W$.  Since $k$ is odd and $y$ has even exponent sum, it follows that $a$ is odd, and by Theorem \ref{twotwosthm} we may assume that $a=1$.  Clearly $b$ is odd since $k$ is odd.

In particular, $b\ge 1$, so there exists an essential representation $\rho$ from $G$ onto the dihedral group of order $8$, corresponding to the root $0$ of $\tau_W(\lambda)$.  Since the exponent sums of $x,y$ in $W$ are odd and even respectively, it follows that $\rho(W)$ is conjugate to $\rho(x)$.  The subgroup $H:=\rho^{-1}(\<\rho(x)\>)$ of $G$ has index $4$ and a Schreier transversal $\{y^j~;~0\le j\le 3\}$.  The corresponding Reidemeister-Schreier presentation for $H$ therefore has the form
$$\<x_0,x_1,x_2,x_3~|~x_0^2=x_2^2=x_1x_3=W_0^2=W_2^2=W_1W_3=1\>,$$
where $x_j=y^jxy^{-j}$ and $W_j=y^jWy^{-j}$.  Since this presentation has $4$ generators and only $2$ relators that are not squares, it follows from Theorem \ref{large} that $H$ is large, provided it has infinite abelianisation.  It remains to prove that the abelianisation of $H$ is indeed infinite.  We do so using the exact sequence
$$0 \to K/[K,H] \to H/[H,H] \to H/K \to 0,$$ 
where $K$ denotes the kernel of $\rho$.

%There exist matrices $X,Y\in SU(2)$ with $Tr(X)=0$, $Tr(Y)=\sqrt2$ and $Tr(XY)=1/\sqrt2$.  Then $W(X,Y)\in SU(2)$ so
By Lemma \ref{parity} we have
$$2\ge% |Tr(W(X,Y)|=
|\tau_W(1/\sqrt2)|=\sqrt2^{a-b}(3/2)^c.$$
It follows that $b\ge 3$, since otherwise $b=1$, so $c\ge 2$ and the above inequality reads $2\ge 9/4$, clearly a contradiction.

Let $\Lambda:=\C[\lambda]/\<\<(\lambda^2)\>\>$.  Then we can lift $\rho$ to a representation $\wh\rho:G\to PSL(2,\Lambda)$ given by matrices
$$X:=\begin{pmatrix} 0 & -1\\ 1 & 0\end{pmatrix},\qquad Y:=\begin{pmatrix} \zeta & \lambda \\ 0  & \ol\zeta\end{pmatrix},$$
where $\zeta$ is a primitive $8$'th root of unity in $\C$.

Then $\wh\rho(K)$ is generated by the conjugates of $$-\wh\rho((xy)^2)=I-\lambda(XY)\equiv I+\lambda\begin{pmatrix} 0 & \ol\zeta\\ -\zeta & 0\end{pmatrix}~~\mathrm{mod}~~\lambda^2$$ by powers of $$\wh\rho(y)\equiv\begin{pmatrix} \zeta & 0 \\ 0  & \ol\zeta\end{pmatrix}~~\mathrm{mod}~~\lambda .$$

Since $(I+\lambda A)(I+\lambda B)\equiv I+\lambda(A+B)$ modulo $\lambda^2$ for any matrices $A,B$, it follows that $\wh\rho(K)$ is free abelian.  Indeed a closer inspection reveals that $\wh\rho(K)\cong\Z^2$, and that conjugation by  $\wh\rho(x)$ acts by interchanging a basis pair.  Hence $K/[K,H]$ has an infinite homomorphic image, which completes the proof.
\end{proof}

\section{Short Words}\label{short}

As mentioned in the Introduction, the Rosenberger Conjecture has been verified for words of length parameter $k\le 4$ \cite{Ros,LR}.   This, combined with Theorems \ref{eveneventhm}, \ref{oddoddthm},  \ref{twotwosthm} and \ref{oddeventhm}, verifies the Rosenberger Conjecture for generalised triangle groups of type $(2,4,2)$ for which either the length parameter $k$ is odd or the exponent-sum of $y$ is even.   We examine the remaining sub-case in \S \ref{evenodd} below, where the following three groups $G_j:=\<x,y|x^2=y^4=W_j^2=1\>$ ($j=1,2,3$) are of particular interest.

\medskip
\begin{center}
\begin{tabular}{|l|l|l|}
\hline
$j$ & $W=W_j$ & $\tau_W(\lambda)$ \\
\hline\hline
$1$ & $xyxyxy^3xy^3xyxy^2$ & $\sqrt2(\lambda^2-1)^3$  \\
\hline
$2$ & $xyxyxy^3xy^3xyxy^3xy^3xy^2$ & $\sqrt2(\lambda^2-1)^4$ \\
\hline
$3$ & $xyxy^3xy^3xyxyxyxy^3xy^2$ & $\sqrt2(\lambda^2-1)^4$ \\
\hline
\end{tabular} 
\end{center}

In his PhD thesis \cite{Wil}, Williams extended the results of \cite{Ros} and \cite{LR} to cover words of length parameter $k\le 6$, with a small number of exceptions -- none of which are of type $(2,4,2)$ -- which his methods did not at the time cover.  (Some of these exceptions are covered by more recent results.)   In particular, he showed that the group $G_1$ above is large.  Indeed, it turns out that all three of these groups are large.  This can easily be checked by computer, but in the interests of completeness (and because \cite{Wil} is unpublished) we give a brief sketch proof here.

\begin{lem}\label{W123}
The three groups $G_1,G_2,G_3$ are large.
\end{lem}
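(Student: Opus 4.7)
The plan is to apply Lemma \ref{large} (with $p=2$) to a suitable finite-index subgroup of each $G_j$. A routine computation of $W_j^{ab}$ from $\exp_x(W_j), \exp_y(W_j)$ shows that $G_j^{ab}\cong\Z_2\oplus\Z_2$ for each $j$; in particular Lemma \ref{large} does not apply to $G_j$ itself. We therefore pass to a proper finite-index subgroup $H_j$ and verify (i) that $H_j$ admits a presentation satisfying the numerical hypothesis $m>n+1$ of Lemma \ref{large}, and (ii) that $H_j/[H_j,H_j]$ is infinite.

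Natural candidates for $H_j$ come from low-index subgroups. Each $G_j$ has three index-$2$ subgroups (corresponding to the three nontrivial cosets of $[G_j,G_j]$); one of these is the subgroup $K=\<x,yxy^{-1},y^2\>$ of Corollary \ref{Honto}, with its natural $3$-generator, $5$-relator presentation $\<u,v,z \mid u^2=v^2=z^2=W_1^2=W_2^2=1\>$ in which every relator is a square. Within $K$ sits the index-$2$ subgroup generated by $\{uv,uz,vz\}$, hence of index $4$ in $G_j$. In addition, each $G_j$ admits an essential representation $\rho_j:G_j\twoheadrightarrow S_4$ corresponding to the root $\lambda=1$ of $\tau_{W_j}(\lambda)=\sqrt2(\lambda^2-1)^c$ (the image must be $S_4$ because $\rho_j(y)$ has order $4$); this furnishes further candidate subgroups at indices $3,4,6,8,12,24$ according to which subgroup of $S_4$ one takes the preimage of.

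For each candidate $H_j$, I would compute a Reidemeister-Schreier presentation, verify the numerical hypothesis $m>n+1$ of Lemma \ref{large} (typically easy, because most rewritten relators inherit the square form of the original relators $x^2$, $y^4=(y^2)^2$, $W_j^2$), and then compute $H_j^{ab}$ via Smith normal form on the resulting integer relation matrix. Once some $H_j$ with $H_j^{ab}$ infinite is located, the lemma gives largeness of $H_j$, and hence of $G_j$.

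The main obstacle is the computational verification that $H_j^{ab}$ is infinite for some candidate: this amounts to Smith normal form on an integer matrix whose size grows with the index of $H_j$, and some trial among candidates may be required before a suitable one is found. For $G_1$ the corresponding calculation at moderate index was carried out by Williams in his thesis \cite{Wil}; for $G_2$ and $G_3$ the same method works, possibly at somewhat larger index, and is easily verified by computer algebra as the preamble to the lemma asserts.
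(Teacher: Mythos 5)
Your strategy (Button's criterion, Lemma \ref{large}, applied to a suitable finite-index subgroup) is a legitimate route in principle, and it is indeed the method the paper uses elsewhere (Theorems \ref{twotwosthm} and \ref{oddeventhm}); it differs from the paper's actual proof of this lemma, which avoids Lemma \ref{large} entirely and instead argues directly: for $G_1$ the kernel of $G_1\onto S_4$ is analysed via the octahedral graph and shown to surject onto $F_2$; for $G_2$ one shows $G_2\onto G_1$ (the dihedral trick with $W_1=AB$, $W_2=AB^3$); and for $G_3$ an explicit index-$24$ subgroup is shown to map onto a free product of two Klein four-groups. The reason for this choice matters: the paper carefully separates computer-verified results (labelled Computer Lemma/Corollary) from theoretical ones, and $G_1,G_2,G_3$ feed into the purely theoretical bound in Theorem \ref{longce}, so an appeal to ``easily verified by computer algebra'' is exactly what the paper is trying to avoid here.

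The genuine gap is that your argument is a programme rather than a proof: for none of $G_1,G_2,G_3$ do you actually exhibit a finite-index subgroup $H_j$ and verify the two hypotheses of Lemma \ref{large}. Your first candidate visibly fails: in $K=\<u,v,z\mid u^2=v^2=z^2=W_1^2=W_2^2=1\>$ every relator is a square, so $K^{ab}\cong\Z_2^3$ is finite and Lemma \ref{large} gives nothing; the deeper candidates (the index-$4$ subgroup of $K$, preimages of subgroups of $S_4$ under the essential representation) are merely listed, with both the numerical condition $m>n+1$ and, crucially, infiniteness of the abelianisation left unchecked, and there is no a priori guarantee that any of them works -- note that for $G_3$ the paper had to descend to an index-$24$ subgroup, and even there it did not establish largeness via an infinite abelianisation but via an explicit non-elementary virtually free quotient. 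The citation of Williams covers only $G_1$ (and his argument is the graph-theoretic one the paper sketches, not a Lemma \ref{large} computation), so for $G_2$ and $G_3$ the claim rests entirely on an unspecified computation. To make your approach complete you would need to name a specific $H_j$ for each $j$, write down (or at least describe precisely) its Reidemeister--Schreier presentation, count the non-square relators, and demonstrate that $H_j^{ab}$ is infinite -- or else replace this step by explicit arguments of the kind the paper gives.
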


\begin{spf}

\medskip\noindent $\mathbf{G_1}$

We can identify the kernel of the epimorphism $\Z_2*\Z_4\onto S_4$ with the fundamental group of a graph $\G$, the $1$-skeleton of the octahedron.  Paths corresponding to words in $x,y$ can be traced out in $\G$ according to the rule that $x$-letters mean travel along an edge, while $y,y^2,y^3$ letters are instructions to turn right, carry straight on, and turn left respectively, in passing from one edge to the next. The relator $W_1^2$ of $G_1$ lifts to twelve relations in the kernel $K_1$ of $G_1\onto S_4$.  Tracing these according to the rule, each relation identifies a clockwise triangle of the octahedron (a conjugate of $(xy)^3$ in $\Z_2*\Z_4$) with a conjugate of the anti-clockwise antipodal triangle.  

To show that $G_1$ is large, we 
map $\G$ onto a smaller graph $\D$: choose an edge $e\in\G$ and shrink it to a point.  For each of the two triangles containing $e$, fold the other two edges of the triangle together.  Now do the same at the edge antipodal to $e$ in $\G$.
The resulting graph $\D$ has four vertices -- two of index $2$ and two of index $4$ -- and $6$ edges.  Hence $\pi_1\D$ is free of rank $3$.

Six of the relators of $K_1$ correspond to antipodal pairs of triangles of $\G$ that are collapsed in passing to $\D$, and hence their images in $\D$ are nullhomotopic.   It is an exercise to check that each of the other six relators of $K_1$ in $\G$ map to closed paths in $\D$ which -- possibly after cyclic reduction -- involve each edge of $\D$ precisely once, and hence correspond to primitive words in $\pi_1\D$.  Moreover, the three primitive words corresponding to one antipodal  pair of triangles are all conjugates of the same primitive word $U$, say, while those arising fron the other antipodal pair are all conjugates of $U^{-1}$.  Hence $K_1$ has a homomorphic image $\pi_1\D/\<\<U\>\>\cong F_2$, and so $G_1$ is large, as claimed.

\medskip\noindent $\mathbf{G_2}$.

Note that $W_1$ can be written $AB$, where $A=xyxyxy^3xy^3x$ has order $2$ in $\Z_2*\Z_4$ and $B=yxy^2$.
Hence in $G_1$ we have $A^2=(AB)^2=1$ and so $A,B$ generate a dihedral subgroup of $G_1$.  It follows from this that $(AB^n)^2=1$ in $G_1$ for all $n\in\Z$.  But note that $W_2=AB^3$ in $\Z_2*\Z_4$, so $W_2^2=1$ in $G_1$, whence $G_2$ has $G_1$ as a homomorphic image. The largeness of $G_2$ therefore follows from that of $G_1$.

\medskip\noindent $\mathbf{G_3}$.

Let $U:=(xyxy^{-1})^6$.   Then the normal closure $N$ of $U$ and $y^2$ in $G_3$ has index $24$ and a Schreier transversal consisting of initial segments of the word $U$.  The Reidemeister-Schreier presentation of $N$ obtained using this transversal has generators $U$ and twelve conjugates of $y^2$:
$$V_0:=y^2,~~V_1:=xy^2x^{-1},~~V_2:=xyxy^2x^{-1}y^{-1}x^{-1},$$
$$V_3:=xyxy^{-1}xy^2x^{-1}yx^{-1}y^{-1}x^{-1},~~\mathrm{etc.}$$
There are twelve relators of the form $V_j^2$ arising from the relator $y^4$ of $G_3$, and twelve further relators arising from the relator $W_3^2$.  The alternating exponents of $y$ in the word $U$ mean that the latter relators fall into two distinct patterns according to parity.  These are (with subscripts modulo $12$):
$$R_j:=\left\{ \begin{array}{ll}
V_{j+3}V_{j+4}V_{j+6}V_{j+7}V_{j+8}V_{j+7}V_{j+6}V_{j+3}V_j & (j~\mathrm{even}),\\
 & \\
V_{j+1}V_{j+2}V_{j+5}V_{j+8}V_{j+5}V_{j+4}V_{j+2}V_{j+1}V_j & (j~\mathrm{odd}).
\end{array}\right. $$
If we further factor out the generators $V_j$ for $j\equiv 2,3$ modulo $4$, then we obtain a quotient group which has presentation
$$\<~V_0,V_1,V_4,V_5,V_8,V_9~|~V_j^2=1~(\forall~j), ~V_0V_4V_8=V_9V_5V_1=1~\>,$$
and so is isomorphic to the free product of two copies of the Klein four-group.  It follows that $N$, and hence also $G_3$, is large as claimed.
\end{spf}

\section{The Awkward Sub-case}\label{evenodd}

By Theorems \ref{eveneventhm}, \ref{oddoddthm}, \ref{twotwosthm} and \ref{oddeventhm}, we are reduced to consideration of the case where $x,y$ have even and odd exponent sums in $W$, respectively. We are unable to conclusively verify Conjecture \ref{Rconj} for this subcase, but we can obtain strong lower bounds for the length parameter of any counterexample. 

 In this subcase, the group $G$ has no essential representation of type (a) or (b).  So if $G$ has no non-abelian free subgroup then  $\tau_W(\lambda)=\sqrt2^a(\lambda^2-1)^m$ with $m=k/2$, by Corollary \ref{tauWform}.  Moreover, since $a$ is odd and $(-1)^m\sqrt2^a=\tau_W(0)\le 2$ by Lemma \ref{parity}, it follows that $a=1$.  In other words, precisely one letter of $W$ is equal to $y^2$.

The following observation will be useful later:

\begin{lem}\label{modeight}
If $W$ has length parameter $k=2m$ in $x$ and exponent-sum $\ell$ in $y$, and $\tau_W(\lambda)=\sqrt2(\lambda^2-1)^m$ then $2k+\ell\equiv\pm1$ modulo $8$.
\end{lem}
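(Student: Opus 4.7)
The plan is to evaluate $\tau_W(\lambda)$ at $\lambda=\sqrt2$ by two independent routes and compare.  The hypothesis $\tau_W(\lambda)=\sqrt2(\lambda^2-1)^m$ immediately yields $\tau_W(\sqrt2)=\sqrt2$.  For the second evaluation I would realise the point $(0,\sqrt2,\sqrt2)\in\X_{F_2}$ by an explicit reducible pair of matrices chosen so that the product $W(X,Y)$ is manifestly upper triangular.

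Concretely, take $Y=\mathrm{diag}(\zeta,\zeta^{-1})$ with $\zeta=e^{i\pi/4}$, and $X=\left(\begin{array}{cc}-i&1\\0&i\end{array}\right)$.  A direct check confirms $X,Y\in SL(2,\C)$ with $\mathrm{Tr}(X)=0$, $\mathrm{Tr}(Y)=\sqrt2$ and $\mathrm{Tr}(XY)=\sqrt2$.  Each factor $XY^{a(j)}$ is upper triangular with diagonal entries $-i\zeta^{a(j)}$ and $i\zeta^{-a(j)}$, so $W(X,Y)$ is upper triangular with diagonal entries $(-i)^k\zeta^\ell$ and $i^k\zeta^{-\ell}$.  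Since $k=2m$, summing these gives
$$\tau_W(\sqrt2)=2(-1)^m\cos(\pi\ell/4),$$
and equating with the first evaluation yields $\cos(\pi\ell/4)=(-1)^m\sqrt2/2$.

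The remaining step is a short mod-$8$ analysis.  That cosine equation forces $\ell\equiv\pm1\pmod{8}$ when $m$ is even and $\ell\equiv\pm3\pmod{8}$ when $m$ is odd.  Since $2k=4m$ is congruent to $0$ or $4$ modulo $8$ in the two cases respectively, checking the four sign/parity combinations gives $2k+\ell\equiv\pm1\pmod{8}$ in every case, as claimed.  No serious obstacle is anticipated: the argument is essentially a single-point trace calculation against a carefully chosen reducible representation, with the only subtlety being the closing arithmetic that merges the two parity-of-$m$ subcases into a single congruence on $2k+\ell$.
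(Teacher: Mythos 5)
Your argument is correct and is essentially the paper's own proof: both evaluate the trace polynomial at a point $\lambda=\pm\sqrt2$ by means of an explicit reducible pair of matrices with the prescribed traces, and read off the congruence from the resulting eighth-root-of-unity equation. The paper's only simplification is to take $X:=Y^2$ for $Y\in SU(2)$ of trace $\sqrt2$ (so $\lambda=-\sqrt2$ and $W(X,Y)=Y^{2k+\ell}$), which makes $2k+\ell\equiv\pm1\pmod 8$ drop out in one step, whereas your choice at $\lambda=+\sqrt2$ requires the closing parity-of-$m$ case split.
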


\begin{proof}
Choose a matrix $Y\in SU(2)$ with trace $\sqrt2$ and let $X:=Y^2$ so that $Tr(X)=0$ and $Tr(XY)=-\sqrt2$.  Then
$$\sqrt2=\tau_W(-\sqrt2)=Tr(W(X,Y))=Tr(Y^{2k+\ell}),$$
which gives the result.
\end{proof}

A useful tool for this case is another consequence of Lemma \ref{parity}.  To explain, it is helpful to rewrite 
\begin{equation}\label{unbalanced}
W=xy^{a(1)}\cdots xy^{a(2m-1)}xy^2,~~(a(j)\in\{1,3\}~~\mathrm{for}~1\le j\le 2m-1)
\end{equation}
(up to conjugacy) in the more balanced form
\begin{equation}\label{balanced}
W=y^{a(-m)} x y^{a(1-m)} x \cdots x y^{a(m-1)} x y^{a(m)},
\end{equation}
where $a(j)\in\{1,3\}$ for all $j$ and in particular $a(-m)=1=a(m)$.

\begin{lem}\label{sums}
If $W$ has the form \eqref{balanced}, then $$\sqrt2\tau_W(\lambda)\equiv\sum_{j\in J} P_j(\lambda)~\mathrm{mod}~4,$$
where $J$ is the set of indices $-m\le j\le m$ for which $a(j)=a(-j)$ and $P_j(\lambda)$ is the trace polynomial of $(xy)^{2j}$. 
\end{lem}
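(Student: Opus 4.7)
The plan is to expand $W(X,Y)$ using the Cayley--Hamilton identity $Y^3 = Y - \sqrt2\,I$ and to track the result modulo~$4$. Setting $t(j):=(a(j)-1)/2 \in \{0,1\}$ and $T:=\{j:a(j)=3\}$, substitute $Y^{a(j)} = Y - \sqrt2\,t(j)\,I$ at each position and expand multilinearly to obtain
\[
\sqrt2\,\tau_W(\lambda) = \sum_{S\subseteq T}(-1)^{|S|}\sqrt2^{\,|S|+1}\,\tau_{V_S}(\lambda),
\]
where $V_S$ is the word obtained from the base word $W_0 := y(xy)^{2m}$ by replacing the $Y$-letter at each position $j\in S$ by the identity (the flanking $X$-letters then combine via $X^2=-I$). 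Since $\sqrt2^{\,|S|+1}$ is divisible by $\sqrt2^{\,4}=4$ for $|S|\ge 3$, only the terms with $|S|\le 2$ can contribute modulo~$4$.

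For the $|S|=0$ term, a short induction on $m$ using \eqref{trid} yields the closed form $\tau_{W_0}(\lambda) = \sqrt2\bigl(1+\sum_{k=1}^{m}P_k(\lambda)\bigr)$, so that $\sqrt2\,\tau_{W_0} = \sum_{j=-m}^{m}P_j$ exactly (using $P_0=2$ and $P_{-k}=P_k$); this already handles the palindromic case $W=W_0$. For $|S|=1$, the simplification $X^2=-I$ gives $V_{\{j\}} = -(YX)^{j+m-1}Y^2(XY)^{m-j-1}$, and a telescoping recursion based on $Y^2=\sqrt2\,Y-I$ yields
\[
\tau_{V_{\{j\}}}(\lambda) = -\sqrt2\sum_{k=1}^{m-|j|}\tau_{y(xy)^{2m-2k}}(\lambda) + P_{|j|}(\lambda).
\]
Each summand $-\sqrt2\,\tau_{y(xy)^{2n}}$ lies in $2\Z[\lambda]$ (since $\tau_{y(xy)^{2n}}\in\sqrt2\,\Z[\lambda]$ by Lemma~\ref{parity}), so $\tau_{V_{\{j\}}}\in\Z[\lambda]$ and $\tau_{V_{\{j\}}} \equiv P_j \pmod 2$. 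A transpose argument---choose $X,Y$ simultaneously symmetric in $SL(2,\C)$ with the prescribed traces (possible for every $\lambda$)---yields $\tau_{V_{\{j\}}} = \tau_{V_{\{-j\}}}$. Consequently each symmetric pair $\{j,-j\}\subseteq T$ contributes an even multiple of $\tau_{V_{\{j\}}}$ to $\sum_{j\in T}\tau_{V_{\{j\}}}$, so modulo~$2$ only the asymmetric indices (those with $t(j)\ne t(-j)$, equivalently $j\notin J$, with $j>0$ chosen as representative) survive.

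Finally, the $|S|=2$ contribution $2\sqrt2\sum_{|S|=2}\tau_{V_S}$ a priori lies in $\sqrt2\,\Z[\lambda]$; but $\sqrt2\,\tau_W\in\Z[\lambda]$ by Lemma~\ref{parity}, while the $|S|=0$ and $|S|=1$ contributions both lie in $\Z[\lambda]$ by the analysis above. The $\sqrt2$-component of the $|S|=2$ term is therefore forced to vanish modulo~$4$, which requires the integer part of $\sum_{|S|=2}\tau_{V_S}$ to be even; this makes the whole $|S|=2$ contribution $\equiv 0\pmod 4$. Combining,
\[
\sqrt2\,\tau_W \equiv \sum_{j=-m}^{m}P_j - 2\sum_{\substack{j>0\\ j\notin J}}P_j = \sum_{j\in J}P_j \pmod 4,
\]
using $P_j+P_{-j}=2P_j$ for the indices $j$ outside $J$. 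The main obstacle is setting up the telescoping recursion for $\tau_{V_{\{j\}}}$ cleanly, especially in the boundary cases where an exponent in $(YX)^{\cdot}$ or $(XY)^{\cdot}$ reaches zero; the transpose symmetry and the `parity forces cancellation' argument for $|S|=2$ are then straightforward once the $|S|=1$ analysis is in place.
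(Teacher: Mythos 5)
Your argument is correct, but it is genuinely different from the one in the paper. The paper proves the lemma by an induction that peels off the outermost matched pair: writing $W=y^{a(-m)}U$ and using the trace identity \eqref{trid} to get $\tau_W=\pm\sqrt2\,\tau_U-\tau_V$, it reduces $\tau_U$ modulo $2$ to $P_m$ via Corollary \ref{OKmod2}, then cyclically cancels mismatched pairs $a(j)\ne a(-j)$ (each contributing $Y^4=-I$, hence only a sign, which is harmless modulo $4$ because $\sum_{j\in J'}P_j$ is always even) until it reaches the next matched pair, and recurses. You instead expand globally: substituting $Y^{a(j)}=Y-\sqrt2\,t(j)I$ and organising the expansion by the subset $S$ of modified positions, you kill $|S|\ge3$ by divisibility by $\sqrt2^{\,4}=4$, evaluate the $|S|=0$ term exactly as $\sum_{j=-m}^{m}P_j$, obtain a closed telescoping formula showing $\tau_{V_{\{j\}}}\in\Z[\lambda]$ with $\tau_{V_{\{j\}}}\equiv P_{|j|}\pmod 2$ (which, note, already depends only on $|j|$, so the transpose argument is not strictly needed), and eliminate the $|S|=2$ block by comparing $\sqrt2$-components modulo $4$, using $\tau_W\in\sqrt2\,\Z[\lambda]$ from Lemma \ref{parity}. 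I checked the component bookkeeping and it does close up (the $|S|=0,1$ contributions lie in $\Z[\lambda]$ and the $|S|\ge3$ ones are divisible by $4$ in $\Z[\sqrt2][\lambda]$, so the $\sqrt2$-part of the $|S|=2$ contribution is forced into $4\Z[\sqrt2][\lambda]$); the only point you should state explicitly is the degenerate index $j=0$, where the ``pair'' $\{0,-0\}$ contributes $\tau_{V_{\{0\}}}$ only once, but your own formula gives $\tau_{V_{\{0\}}}\equiv P_0=2\equiv0\pmod2$, so nothing goes wrong. In exchange for more bookkeeping, your route yields exact expressions for the zeroth- and first-order terms and makes visible exactly why the set $J$ of symmetric indices governs the congruence, whereas the paper's peeling induction is shorter and reuses Corollary \ref{OKmod2} directly.
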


\begin{proof}
Let $$U=x y^{a(1-m)} x \cdots x y^{a(m-1)} x y^{a(m)}$$ and $$V=x y^{a(1-m)} x \cdots x y^{a(m-1)} x.$$  Then since $a(-m)=a(m)$ the standard trace identity \eqref{trid} gives $$\tau_W(\lambda)=\tau_y(\lambda)\tau_U(\lambda) - \tau_V(\lambda).$$
But $\tau_y(\lambda)=\sqrt2$ and $\tau_U(\lambda)\equiv P_m(\lambda)~\mathrm{mod}~2$, by Corollary \ref{OKmod2}.
If $n>0$ is the largest integer less than $m$ for which $a(-n)=a(n)$, then $V$ is conjugate to 
$$\pm y^{a(-n)} x \cdots  x y^{a(n)}$$ and an inductive argument completes the proof.  If no such $n$ exists, then $V$ is conjugate to $\pm y^{a(0)}$ and so $\tau_V(\lambda)=\pm\sqrt2=\pm P_0(\lambda)/\sqrt2$, giving the desired result in this case.
\end{proof}

\begin{lem}\label{Laurent}
If $\tau_W(\lambda)=\sqrt2(\lambda^2-1)^m$, then the set $J$ in Lemma \ref{sums} is precisely the set of integers $j$ for which the coefficient of $t^j$ in the Laurent polynomial $(t+1+t^{-1})^m$ is odd.
\end{lem}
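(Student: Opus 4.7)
The plan is to translate the mod-$4$ congruence supplied by Lemma \ref{sums} into a coefficientwise comparison in a Laurent polynomial ring, from which the characterisation of $J$ will be immediate.  Substituting the hypothesis $\tau_W(\lambda)=\sqrt2(\lambda^2-1)^m$ into Lemma \ref{sums} yields the polynomial congruence
\[2(\lambda^2-1)^m\equiv\sum_{j\in J}P_j(\lambda)\pmod 4\]
in $\Z[\lambda]$, and I intend to reinterpret this identity in terms of the Laurent polynomial $(t+1+t^{-1})^m$.

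First I would introduce a formal indeterminate $\mu$ and set $\lambda:=\mu+\mu^{-1}$, working thereafter in $\Z[\mu,\mu^{-1}]$.  Writing $\nu:=\mu^2$, we have $\lambda^2-1=\nu+1+\nu^{-1}$, and hence
\[(\lambda^2-1)^m=(\nu+1+\nu^{-1})^m=\sum_{j=-m}^{m}c_j\nu^j,\]
where the $c_j$ are precisely the Laurent coefficients appearing in the statement.  On the other side, $P_j(\lambda)$ is the trace polynomial of $(xy)^{2j}$, and the standard Chebyshev-type recursion gives $P_j(\lambda)=\nu^j+\nu^{-j}$ (with $P_0=2$).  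The defining condition $a(j)=a(-j)$ is visibly symmetric in $\pm j$, so $J=-J$, and therefore
\[\sum_{j\in J}P_j(\lambda)=\sum_{j\in J}(\nu^j+\nu^{-j})=2\sum_{j\in J}\nu^j.\]

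The congruence then reads $2\sum_j c_j\nu^j\equiv 2\sum_{j\in J}\nu^j\pmod 4$ in $\Z[\nu,\nu^{-1}]$; dividing by $2$ gives $\sum_j c_j\nu^j\equiv\sum_{j\in J}\nu^j\pmod 2$, and since the monomials $\nu^j$ are $\Z$-linearly independent, comparing the coefficient of each $\nu^j$ on the two sides yields that $c_j$ is odd precisely when $j\in J$, as required.  I do not foresee any serious obstacle here: once the substitution $\lambda=\mu+\mu^{-1}$ is in place the manipulation is short, and the only two small points requiring a line of verification (the symmetry of $J$ under $j\mapsto-j$, and the identity $P_j(\lambda)=\nu^j+\nu^{-j}$) are both immediate from the definitions.
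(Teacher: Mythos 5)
Your argument is correct and is essentially the paper's: both rest on expanding $2(\lambda^2-1)^m=\sqrt2\,\tau_W(\lambda)$ as $\sum_j c_j P_j(\lambda)$ and then comparing with the mod-$4$ congruence of Lemma \ref{sums}. The paper obtains that expansion from the Cayley--Hamilton identity $(XY)^2+I+(XY)^{-2}=(\lambda^2-1)I$ raised to the $m$-th power, whereas you use the substitution $\lambda=\mu+\mu^{-1}$ with $P_j=\nu^j+\nu^{-j}$ -- an equivalent device, which has the small merit of making the final coefficient comparison (left implicit in the paper's ``the result follows'') completely explicit.
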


\begin{proof}
It follows from the Cayley-Hamilton Theorem (or by direct calculation) that $A+A^{-1}=\mathrm{Trace}(A)\cdot I$ for any matrix $A\in SL_2(\C)$.  If $X,Y\in SL_2(\C)$ are matrices with $\mathrm{Trace}(X)=0$, $\mathrm{Trace}(Y)=\sqrt2$ and $\mathrm{Trace}(XY)=\lambda$, then $\mathrm{Trace}((XY)^2)=\lambda^2-2$, so
$$\mathrm{Trace}\left(~\left[(XY)^2 + I + (XY)^{-2}\right]^m~\right) =\mathrm{Trace}\left((\lambda^2-1)^m\cdot I\right)=2(\lambda^2-1)^m,$$
and the result follows.
\end{proof}

\begin{lem}\label{div2}
Suppose that $W$ has the form in \eqref{balanced} with $m=2n$ even and $G=\<x,y|x^2=y^4=W^2=1\>$ has no non-abelian free subgroups.  Then there exists another word $U$ say, of the form \eqref{balanced} and $m=n$, such that the  group  $H=\<x,y|x^2=y^4=U^2=1\>$ is a subquotient of $G$ -- and hence also has no non-abelian free subgroups.
\end{lem}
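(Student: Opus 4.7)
I would define a candidate word $U$ of balanced length $2n$ explicitly from the $a$-data of $W$, and then show that $H$ is a subquotient of $G$ by combining the parity analysis of Lemmas \ref{sums} and \ref{Laurent} with a Reidemeister--Schreier computation on an appropriate index-$2$ subgroup of $G$.

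The natural construction is to take $U$ in balanced form with $a_U(j) := a_W(2j)$ for $-n \le j \le n$, i.e.\ to retain only the even-indexed $y$-blocks of $W$.  The boundary condition $a_W(\pm 2n) = 1$ forces $a_U(\pm n) = 1$, so $U$ is a legitimate word of the form \eqref{balanced} with $m=n$.  The motivation is Lemma \ref{Laurent} combined with the Frobenius identity $(t+1+t^{-1})^{2n} \equiv (t^2+1+t^{-2})^n \pmod 2$: the set $J_W$ of indices where $a_W(j) = a_W(-j)$ sits entirely on even positions and equals $2 J_U$, where $J_U$ is the analogous set for $U$.  A parity computation using Lemma \ref{sums} should then give $\tau_U(\lambda) \equiv \sqrt 2 (\lambda^2-1)^n \pmod 4$, confirming that $U$ has the correct trace polynomial (modulo $4$) to play the role of the ``halved'' word.

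To realise $H$ as a subquotient, I would pass to the index-$2$ subgroup $G_0 \le G$ of even $x$-exponent (the kernel of $G \twoheadrightarrow \Z_2$ sending $x \mapsto 1, y \mapsto 0$; this is well-defined since the length parameter $k=4n$ is even).  With Schreier transversal $\{1,x\}$, the Reidemeister--Schreier procedure gives $G_0 = \langle \alpha, \beta \mid \alpha^4 = \beta^4 = \bar W^2 = (\overline{xWx^{-1}})^2 = 1\rangle$, where $\alpha = y$, $\beta = xyx^{-1}$, and $\bar W = \alpha^{a(-2n)}\beta^{a(-2n+1)}\alpha^{a(-2n+2)}\beta^{a(-2n+3)}\cdots\alpha^{a(2n)}$.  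Then I would quotient $G_0$ further by imposing an identification that conjugates $\beta$ into a suitable power of $\alpha$; the doubling symmetry $J_W = 2 J_U$ is what should allow the odd-indexed $\beta$-factors to cancel in pairs, leaving the even-indexed $\alpha$-factors as the word $U$ (in the unbalanced form) and collapsing $\bar W^2$ to $U^2$ in the quotient.  What remains is a presentation with two generators, one of order $2$ and one of order $4$, and single relator $U^2$, i.e.\ the presentation of $H$.

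The hard part is the last step: verifying rigorously that the collapse of the $\beta$-letters really produces $U^2$ rather than a longer word or something weaker, and that no extraneous relations are picked up.  The input from $J_W = 2 J_U$ is essential here, as it is what forces the odd-indexed pattern of $\bar W$ to be antisymmetric and hence to cancel under the identification.  Should this direct manipulation prove too delicate, a fallback is a representation-theoretic argument in the spirit of Corollary \ref{Honto}: the identity $\tau_W(\lambda) = \tau_U(\lambda)^2/\sqrt 2$ suggests that the essential $\P$-character variety of $H$ should appear as an at most $2$-to-$1$ image of that of $G_0$, and then the dimension count of Lemma \ref{dim0} together with Rosenberger's classification would transfer the absence of non-abelian free subgroups from $G_0$ to $H$.
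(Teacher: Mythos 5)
You have the right skeleton: the word $U$ defined by $a_U(j)=a_W(2j)$ is exactly the word the paper's proof produces, and your index-$2$ subgroup $G_0$ (the kernel of $x\mapsto 1$, $y\mapsto 0$), with generators $\alpha=y$, $\beta=xyx$ and relators $\alpha^4=\beta^4=\bar W^2=(\overline{xWx})^2=1$, is the same subgroup used in the paper. The gap is the quotienting step, which is where the whole proof lives and which you yourself flag as unverified. The correct move is simply to kill the normal closure of $\alpha^2=y^2$ in $G_0$, not to identify $\beta$ with a conjugate of a power of $\alpha$: your proposed identification produces no generator of order $2$, and if the odd-indexed $\beta$-letters of $\bar W$ really cancelled in pairs the residue would be a word in $\alpha$ alone, which cannot be the two-generator word $U$. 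You also have the roles of the two relators reversed. After imposing $\alpha^2=1$, the even-indexed $\alpha$-blocks of $\bar W$ (all of odd exponent) reduce to single letters, and the antisymmetry $a(j)\ne a(-j)$ for every \emph{odd} $j$ -- which follows from Lemma \ref{Laurent} and Frobenius once one records that $\tau_W(\lambda)=\sqrt2(\lambda^2-1)^{2n}$, a step you must also make explicit from the hypothesis that $G$ has no non-abelian free subgroup -- gives $\beta^{a(-j)}=\beta^{-a(j)}$, so the image of $\bar W$ equals $T^{-1}\alpha T$ with $T=\beta^{a(1)}\alpha\beta^{a(3)}\alpha\cdots\beta^{a(2n-1)}\alpha$. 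Thus $\bar W$ maps to an involution and the relator $\bar W^2$ becomes \emph{redundant}; it does not ``collapse to $U^2$''. It is the other relator, the rewrite $\beta^{a(-2n)}\alpha^{a(1-2n)}\beta^{a(2-2n)}\cdots\alpha^{a(2n-1)}\beta^{a(2n)}$ of $xWx$, whose odd-indexed $\alpha$-blocks reduce to single letters and which therefore becomes exactly $U^2$ (with $\alpha$ in the role of $x$ and $\beta$ in the role of $y$), exhibiting $H$ as a quotient of $G_0$ and hence a subquotient of $G$.

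Two further remarks. The mod-$4$ computation of $\tau_U$ via Lemma \ref{sums} is not needed: the lemma claims nothing about $\tau_U$, only that $H$ is a subquotient. More importantly, your fallback is not a proof: a relation such as $\tau_W=\tau_U^2/\sqrt2$, or finiteness of the essential character variety of $H$, cannot transfer the absence of non-abelian free subgroups from $G_0$ to $H$. Lemma \ref{dim0} runs in the opposite direction (no free subgroups plus an $A_4$, $S_4$ or $A_5$ quotient implies dimension $0$), and zero-dimensionality of the character variety says nothing about the existence of free subgroups -- if it did, Conjecture \ref{noW} and the whole awkward sub-case would be moot. Only the explicit subquotient construction transfers the property, so the group-theoretic collapse described above cannot be bypassed.
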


\begin{proof}
Since $x,y$ have even and odd exponent sums in $W$, respectively, there are no essential representations $G\to\P$ of types (a) or (b).  Since $G$ has no non-abelian free subgroups, it follows that the only essential representations $G\to\P$ are of type (c), and hence $\tau_W(\lambda)=\sqrt2(\lambda^2-1)^{2n}$.  By Frobenius' Theorem, $$(t^{-1}+1+t)^{2n}\equiv (t^{-2}+1+t^2)^n~\mathrm{mod}~2,$$ and so by Lemma \ref{Laurent} $a(j)\ne a(-j)$ for all odd $j\in\{-m,\dots,m\}$.

The normal closure $K$ of $y$ in $G$ has a presentation of the form
$$K=\<y_1,y_2|y_1^4=y_2^4=W_1^2=W_2^2=1\>,$$
where $y_1=y$, $y_2=xyx$, $W_1=W$ and $W_2=yxWxy^3$.  In particular
$$W_1=y_1^{a(-2n)}y_2^{a(-2n+1)}\cdots y_2^{a(2n-1)} y_1^{a(2n)}$$
and so $W_1^2$ belongs to the normal closure of $y_1^2$.  Hence $$H:=\<y_1,y_2|y_1^2=y_2^4=W_2^2\>$$ is a subquotient of $G$ satisfying the stated conditions.
\end{proof}

Lemma \ref{div2} together with the analysis of short words in \S \ref{short} reduces the Rosenberger conjecture for generalised triangle groups of type $(2,4,2)$ in the even-odd subcase to those with length parameter  $k=2m$ for odd $m>1$ or $k=2^j$ for $j>2$.   The search for words $W$ with $\tau_W(\lambda)=\sqrt2(\lambda^2-1)^m$ is to some extent facilitated by the formulae for the coefficients of $\lambda^{2m-2}$ and $\lambda^{2m-4}$ in \cite[Lemma 9]{HW}, from which one can deduce the following.

\begin{lem}\label{coeffs}
Let $W$ be a word of the form \eqref{unbalanced}, such that $m\ge 3$ and $\tau_W(\lambda)=\sqrt2(\lambda^2-1)^m$. Then:
\begin{itemize}
\item $a(j)=a(j+1)$ for precisely $m-1$ of the integers $j\in\{1,2,\dots,2m-2\}$; and
\item  $a(j)=a(j+2)$ for precisely $(m-3)/2$ (if $m$ is odd) or $(m-2)/2$ (if $m$ is even) of the integers $j\in\{1,2,\dots,2m-3\}$.
\end{itemize}
\end{lem}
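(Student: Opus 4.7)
The plan is to compare the coefficients of $\lambda^{2m-2}$ and $\lambda^{2m-4}$ on both sides of the identity $\tau_W(\lambda)=\sqrt2(\lambda^2-1)^m$. By the binomial theorem, on the right these coefficients are $-\sqrt2\,m$ and $\sqrt2\binom{m}{2}$ respectively.

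On the left, the explicit formulae of \cite[Lemma 9]{HW} express the coefficients of $\lambda^{2m-2}$ and $\lambda^{2m-4}$ in $\tau_W(\lambda)$ as linear functions of the two combinatorial statistics
\[
N_1:=\#\{j:1\le j\le 2m-2,\ a(j)=a(j+1)\}
\quad\text{and}\quad
N_2:=\#\{j:1\le j\le 2m-3,\ a(j)=a(j+2)\},
\]
with coefficients depending only on $m$ (and, in the case of $N_2$, on the parity of $m$). A preliminary step translates the formulae of \cite{HW} to the present setting, where $W$ contains exactly one $y^2$-letter and the remaining letters come from $\{y,y^3\}$: the $y^2$-letter contributes the overall factor $\sqrt2$ in the leading coefficient but does not alter the combinatorial dependence on the sequence $a(1),\ldots,a(2m-1)$.

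Equating the two expressions for each coefficient then yields a system of two linear equations in $N_1$ and $N_2$. The equation arising from $\lambda^{2m-2}$ depends only on $N_1$ and immediately gives $N_1=m-1$. Substituting this back into the equation arising from $\lambda^{2m-4}$ and separating the two parity cases of $m$ yields $N_2=(m-3)/2$ when $m$ is odd and $N_2=(m-2)/2$ when $m$ is even, as required.

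The main obstacle is the careful extraction of the correct coefficients from \cite[Lemma 9]{HW}: the parity dependence in the conclusion for $N_2$ reflects a genuine parity-dependent interaction between the $a(j)=a(j+1)$ matches and the $a(j)=a(j+2)$ matches in the formula for the coefficient of $\lambda^{2m-4}$. Once the two formulae are correctly transcribed to the present setting, the remainder of the argument is elementary algebraic manipulation.
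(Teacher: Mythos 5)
You have chosen the same route as the paper: compare the coefficients of $\lambda^{2m-2}$ and $\lambda^{2m-4}$ in $\tau_W(\lambda)$ with $-\sqrt2\,m$ and $\sqrt2\binom{m}{2}$ using \cite[Lemma 9]{HW}, and the first bullet does come out as you say. The gap lies in your description of the second coefficient. In the present setting the formulae of \cite{HW} attach to each index $j$ (modulo $2m$, with $a(2m)=2$) a number $b(j)$ determined by $a(j)-a(j+1)$, with $b(j)\in\{1,\pm i\}$ for $1\le j\le 2m-2$ but $b(j)=\frac12(1\pm i)$ at the two positions adjacent to the unique $y^2$-letter; the coefficient of $\lambda^{2m-2}$ is $-\sqrt2\,B$ with $B=\sum_j b(j)$, and the coefficient of $\lambda^{2m-4}$ is $\sqrt2\sum_{1<\ell-j<2m-1}b(j)b(\ell)=\frac1{\sqrt2}\left(B^2-\sum_j b(j)^2-2\sum_j b(j)b(j+1)\right)$. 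This is quadratic in the $b(j)$ and is \emph{not} a linear function of your two counts $N_1$ (adjacent matches $a(j)=a(j+1)$) and $N_2$ (matches $a(j)=a(j+2)$) with coefficients depending only on $m$ and its parity. Two further points are glossed over: first, the equation from $\lambda^{2m-2}$ does not ``depend only on $N_1$'' -- it is the complex equation $B=m$, and you need all of it (not just its real part $N_1=m-1$) to control the $B^2$ term; second, contrary to your preliminary step, the $y^2$-letter does alter the combinatorics, since it is exactly the cyclic products involving it that produce correction terms. Taking real parts and using $B=m$, $N_1=m-1$ (so $\mathrm{Re}\sum_j b(j)^2=0$), one arrives at $N_2+1+\epsilon=m/2$, where the $1$ comes from $b(2m-2)b(2m-1)$ and $b(2m)b(1)$ (each of real part $\frac12$) and $\epsilon=\mathrm{Re}\,b(2m-1)b(2m)$ equals $\frac12$ or $0$ according as $a(2m-1)=a(1)$ or not.

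So the dichotomy $(m-3)/2$ versus $(m-2)/2$ is not built into the formula of \cite{HW}, as your proposal asserts; it comes from deciding whether the two letters adjacent to the $y^2$-letter are equal, and you give no argument for this. The paper settles it with an input you never invoke: by Lemmas \ref{sums} and \ref{Laurent}, $a(2m-1)=a(1)$ exactly when the coefficient of $t^{m-1}$ in $(t+1+t^{-1})^m$ is odd, i.e.\ exactly when $m$ is odd. (Alternatively one can finish from the relation $N_2+1+\epsilon=m/2$ by integrality: since $\epsilon\in\{0,\frac12\}$ and $N_2\in\Z$, necessarily $\epsilon=\frac12$ for odd $m$ and $\epsilon=0$ for even $m$.) Without the correct quadratic form of the $\lambda^{2m-4}$ coefficient and one of these two arguments for the wrap-around term, ``equating coefficients and solving a linear system'' does not yield the second bullet.
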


\begin{spf}
In the case of interest, where $(p,q,r)=(2,4,2)$, the expressions in \cite{HW} have a particularly simple form.
Reverting to the form \eqref{unbalanced} for $W$, with $a(2m)=2$, define
$$b(j)=\frac1{\sqrt2}\exp\left(\frac{i\pi(a(j)-a(j+1))}{4}\right)=\frac12\left(1\pm i\right)$$
for $j\in\{2m-1,2m\}$ (indices modulo $2m$), and otherwise
$$b(j)=\exp\left(\frac{i\pi(a(j)-a(j+1))}{4}\right)\in\{1,\pm i\}.$$
Then the coefficient of $\lambda^{2m-2}$ in $\tau_W(\lambda)$ is $-\sqrt2 B$ where
$$B:=\sum_{j=1}^{2m} b(j).$$
Given that, for $1\le j\le 2m-2$, $b(j)=1$ precisely when $a(j)=a(j+1)$, the first part of the result follows by equating the real part of $B$ with $m$.

The coefficient of $\lambda^{2m-4}$ in $\tau_W(\lambda)$ is 
$$\sqrt2\sum_{1<\ell-j<2m-1}b(j)b(\ell)=\frac1{\sqrt2} \left(B^2-\sum_{j=1}^{2m}b(j)^2-2\sum_{j=1}^{2m}b(j)b(j+1)\right)$$
(as before, indices modulo $2m$).  

Now $b(j)^2=\pm i$ when $j\in\{2m-1,2m\}$ and otherwise $b(j)^2=\pm 1$.
Since $B=m$ it follows that $\sum b(j)^2$ has real part $0$, and hence when $\tau_W(\lambda)=\sqrt2(\lambda^2-1)^m$ that $\sum b(j)b(j+1)$ has real part $m/2$.

If $m$ is odd, then $a(2m-1)=a(1)$ by Lemmas \ref{sums} and \ref{Laurent} and so $b(2m-1)b(2m)=1/2$, while 
if $m$ is even then $a(2m-1)\ne a(1)$ and so $b(2m-1)b(2m)=i/2$.  In either case,
each of $b(2m-2)b(2m-1)$, $b(2m)b(1)$ has real part $\frac12$.   For $1\le j\le 2m-3$, $b(j)b(j+1)\in\{1,\pm i\}$, with $b(j)b(j+1)=1$ precisely when $a(j)=a(j+2)$.  The second part of the statement follows.  
\end{spf}

It follows in particular from Lemma \ref{coeffs} that the only candidates for $W$ when $k=6$ or $k=8$ are (up to equivalence)
$$W=yxyxy^3xy^3xyxyxy,$$
$$W=yxyxy^3xy^3xyxyxyxy^3xy,$$
$$W=yxy^3xy^3xyxyxy^3xyxyxy.$$
The resulting groups were shown in \S \ref{short} to have non-abelian free subgroups.  So the desired result follows when the odd part of the length parameter is $1$ or $3$.   To complete the picture, we propose the following conjecture, for which there is some computational evidence, and which implies the Rosenberger Conjecture for groups of type $(2,4,2)$.

\begin{conj}\label{noW}
There is no word $W\in\Z_2*\Z_4$ for which $\tau_W(\lambda)=\sqrt2(\lambda^2-1)^m$ for odd $m\ge 5$.
\end{conj}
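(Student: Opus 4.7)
\medskip\noindent
My plan is, for each odd $m\ge 5$, to rule out the existence of a sequence $(a(j))_{-m\le j\le m}$ with $a(\pm m)=1$ and $a(j)\in\{1,3\}$ otherwise such that the word $W$ in the balanced form \eqref{balanced} has trace polynomial $\sqrt2(\lambda^2-1)^m$.  The framework is to match the coefficient of $\lambda^{2m-2i}$ in $\tau_W(\lambda)$ against that of $\sqrt2(\lambda^2-1)^m$ for successive $i$, converting each equality into a combinatorial constraint on the agreement pattern of $a$.

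The first step extends Lemma \ref{coeffs} by computing, for $i=3,4,\dots$, the coefficient of $\lambda^{2m-2i}$ in $\tau_W(\lambda)$ as a specific polynomial in the complex weights $b(j)=\exp(i\pi(a(j)-a(j+1))/4)$ introduced in the sketch proof of Lemma \ref{coeffs}, via iterated use of the trace identity \eqref{trid}; this polynomial is a sum over the ``well-separated'' $i$-element subsets of indices.  Each matching condition then counts $i$-tuples of positions with a prescribed agreement pattern between nearby $a$-values.  The second step feeds the resulting restrictions into the mod $4$ reduction of Lemma \ref{sums}, together with the explicit description of $J$ from Lemma \ref{Laurent} via the Lucas-type expansion of $(t+1+t^{-1})^m$ modulo $2$, which pins down exactly which palindromic identities $a(j)=a(-j)$ must hold.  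Supplementing with the reversal symmetry $a(j)\leftrightarrow 4-a(-j)$ (available because $\tau_W=\tau_{W^{-1}}$) and with the modulo $8$ constraint on $\ell=\sum_j a(j)$ from Lemma \ref{modeight}, one extracts a system of polynomial congruences that any counterexample sequence $a$ must satisfy.

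The main obstacle --- and the reason this is stated as a conjecture --- will be converting this system into a contradiction uniformly in $m$.  The polynomial-in-$m$ number of constraints does not obviously dominate the $2^{2m-2}$ possible sequences, so I do not expect coefficient-matching alone to settle the question.  A promising route is a descent argument exploiting $2$-adic structure: writing $m=2^s+m'$ with $0\le m'<2^s$ and using $(t+1+t^{-1})^{2^s}\equiv t^{2^s}+1+t^{-2^s}$ modulo $2$, the set $J$ for $m$ decomposes into translates of the $J$ for $m'$.  If this decomposition lifts, via Corollary \ref{OKmod2} and the subword structure of $W$, to an actual factorisation of $W$ as a product of shorter subwords whose trace polynomials are separately of the form $\sqrt2(\lambda^2-1)^{m''}$, one could descend to smaller odd $m$.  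The base cases $m\in\{5,7\}$ would then be treated by direct enumeration of the $a$-sequences compatible with the adjacency constraints of Lemma \ref{coeffs}, combined with largeness arguments along the lines of \S \ref{short}.
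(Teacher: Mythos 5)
The statement you are trying to prove is stated in the paper as a \emph{conjecture} (Conjecture \ref{noW}); the paper does not prove it, and only establishes it for special values of $m$ (the lemma following it handles $m=2^j-1,\,2^j-3,\,3\cdot2^j-1,\,3\cdot2^j-3$, and a computer search extends this to all odd $m\le 31$). Your submission is, by your own admission, a programme rather than a proof: the decisive step --- ``converting this system into a contradiction uniformly in $m$'' --- is exactly what is missing, so there is a genuine gap and nothing in the proposal closes it. The concrete weak point is the proposed descent: you hope that the mod-$2$ factorisation $(t+1+t^{-1})^m\equiv(t^{2^s}+1+t^{-2^s})(t+1+t^{-1})^{m'}$ lifts, via Corollary \ref{OKmod2}, to a factorisation of $W$ into subwords whose trace polynomials are separately of the form $\sqrt2(\lambda^2-1)^{m''}$. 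There is no mechanism for this: the trace identity \eqref{trid} gives $\tau_{UV}=\tau_U\tau_V-\tau_{UV^{-1}}$, not a product formula along concatenations, and the only descent available in the paper (Lemma \ref{div2}) works by passing to a subquotient generated by the conjugates of $y$ and strictly requires $m$ even --- it strips powers of $2$ from $k$ and terminates precisely at the odd values of $m$ that Conjecture \ref{noW} concerns, so it cannot be iterated into a descent among odd $m$.

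A second, more localised error: for the base cases $m\in\{5,7\}$ you propose enumeration ``combined with largeness arguments along the lines of \S \ref{short}.'' Largeness of the groups $\<x,y\mid x^2=y^4=W^2=1\>$ is irrelevant to Conjecture \ref{noW}, which asserts the \emph{non-existence of the word} $W$ with $\tau_W(\lambda)=\sqrt2(\lambda^2-1)^m$, not a property of the resulting group; indeed for $m=3$ and $m=4$ such words do exist (the candidates listed after Lemma \ref{coeffs}) and the corresponding groups are large, which settles Rosenberger for those lengths but leaves Conjecture \ref{noW} untouched there --- the paper makes this distinction explicitly in its closing remark that a small-cancellation counterexample to Conjecture \ref{noW} would escape its own searches. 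For $m=5,7$ the paper's route is purely combinatorial ($5=2^3-3$, $7=2^3-1$ are covered by the parity analysis of the coefficients of $(t+1+t^{-1})^m$ via Lemmas \ref{coeffs} and \ref{Laurent}), and if you want unconditional base cases you should argue that way rather than through largeness. As it stands, your first two steps (higher-coefficient matching in the spirit of Lemma \ref{coeffs}, the mod $4$ identity of Lemma \ref{sums}, the palindromic constraints from Lemma \ref{Laurent}, and Lemma \ref{modeight}) reproduce the paper's partial toolkit, but they are known to eliminate only special families of $m$, and the uniform contradiction you would need is not supplied.
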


The second part of Lemma \ref{coeffs} gives a strong restriction on $W$.  Note also that, if for some $j\ge 0$ the coefficients of $t^j$ and $t^{j+2}$ in $(t+1+t^{-1})^m$ have opposite parity, then either $a(j)=a(j+2)$ or $a(-j-2)=a(-j)$, by Lemma \ref{Laurent}.  Putting these remarks together with an analysis of the coefficients of $(t+1+t^{-1})^m$, one can prove:

\begin{lem}
Conjecture \ref{noW} holds for odd $m$ of one of the forms
\begin{itemize}
\item $2^j-1$ or $2^j-3$ ($j\ge 3$);
\item $3\cdot 2^j-1$ ($j\ge 3$) or $3\cdot 2^j-3$ ($j\ge 2$).
\end{itemize}
\end{lem}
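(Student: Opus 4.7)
The plan is to exploit the remark preceding the statement: whenever the coefficients of $t^j$ and $t^{j+2}$ in $L(t)^m$ (where $L(t):=t+1+t^{-1}$) have opposite parities for some $j\ge 0$, the word $W$ in balanced form \eqref{balanced} must satisfy $a(j)=a(j+2)$ or $a(-j-2)=a(-j)$, so that at least one of the positions $k=j$, $k=-j-2$ is a \emph{match} ($a(k)=a(k+2)$). Lemma \ref{coeffs} restricts the number of matches in the interior range $I:=\{-m+1,\ldots,m-3\}$ to be exactly $(m-3)/2$ for odd $m$. The strategy is to show, for each of the four families of $m$ listed in the statement, that the number of forced matches in $I$ strictly exceeds $(m-3)/2$, giving a contradiction.

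The first step is to make the support of $L(t)^m$ modulo $2$ explicit using the Frobenius identity $L(t)^{2^i}\equiv t^{2^i}+1+t^{-2^i}\pmod 2$. If $m=\sum_i 2^{a_i}$ is the binary expansion, then
$$ L(t)^m \equiv \prod_i \bigl(t^{2^{a_i}}+1+t^{-2^{a_i}}\bigr)\pmod 2, $$
a product of $j$, $j-1$, $j+1$, $j$ factors in the four respective families, each with explicitly known exponent set. Expanding gives, for each $n$, the parity of the coefficient of $t^n$ as the number (modulo $2$) of balanced-binary representations $n=\sum\epsilon_i 2^{a_i}$ with $\epsilon_i\in\{-1,0,1\}$. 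Define $D_m$ to be the set of $j\in\{0,1,\ldots,m-3\}$ for which the coefficients of $t^j$ and $t^{j+2}$ in $L(t)^m$ have opposite parity. The constraint pairs $\{j,-j-2\}$ for $j\in D_m$ are pairwise disjoint (distinct positive entries, distinct negative entries, and the two halves never coincide), so the minimum number of matches in $I$ required to hit all forced constraints equals $|D_m|$.

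The second step is to show $|D_m|>(m-3)/2$. Direct computation in small cases is convincing: one finds $|D_m|=2,3,4,6,8,10,13,22,28$ for $m=5,7,9,13,15,21,23,45,47$ respectively, against the permitted counts $1,2,3,5,6,9,10,21,22$. In general, within each family I would propagate by induction on $j$. The families $2^j-1$ and $2^j-3$ admit the recursion $L(t)^{m'}\equiv (t^{2^j}+1+t^{-2^j})L(t)^m\pmod 2$ (since $m'-m=2^j$), while the $3\cdot 2^j-1$ and $3\cdot 2^j-3$ families can be reduced to the first two by the observation $3\cdot 2^j-r=2^{j+1}+(2^j-r)$, giving a one-step multiplication by $(t^{2^{j+1}}+1+t^{-2^{j+1}})$. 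Under any such multiplication, the support of $L(t)^m$ becomes three shifted copies with limited pairwise overlap; the symmetric difference roughly triples the support size and doubles $|D_m|$, in step with the doubling of $(m-3)/2$, so the positive surplus from the base case is preserved.

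The main obstacle is careful bookkeeping of boundary effects. A few new differing-parity pairs near the endpoint $j=m$ yield constraint pairs $\{j,-j-2\}$ falling outside $I$ and must be excluded, while overlap regions between the three shifted copies can cancel interior pairs in ways that need to be tracked. An explicit but mechanical accounting, separating interior and boundary regions and exploiting that new interior differing-parity pairs accumulate at a rate proportional to $m$ while discarded boundary constraints are bounded independently of $m$, should establish that the net surplus $|D_m|-(m-3)/2$ grows (or at worst remains positive) at each inductive step, completing the proof for each of the four families.
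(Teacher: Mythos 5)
Your reduction is exactly the paper's and it is sound: each $\ell\in\{0,\dots,m-3\}$ at which the coefficients of $t^\ell$ and $t^{\ell+2}$ in $(t+1+t^{-1})^m$ have opposite parity forces $a(\ell)=a(\ell+2)$ or $a(-\ell-2)=a(-\ell)$; the constraint pairs $\{\ell,-\ell-2\}$ are pairwise disjoint and lie in the range counted by Lemma \ref{coeffs}, so it suffices to prove $|D_m|>(m-3)/2$, and your base-case counts agree with the exact values. The genuine gap is the inductive step, which is where all the content lies, and the heuristic you offer for it is wrong on two counts. First, in the within-family passage $2^j-r\to 2^{j+1}-r$ ($r\in\{1,3\}$) the multiplier is $t^{2^j}+1+t^{-2^j}$ with $2^j$ close to $m$, so the three shifted copies of the support of $(t+1+t^{-1})^m$ are \emph{not} ``three copies with limited pairwise overlap'': adjacent copies overlap on roughly half their length, there is heavy cancellation modulo $2$, and the support roughly doubles rather than triples. (Your disjoint-copies picture is correct only for the one-step reduction $3\cdot2^j-r=2^{j+1}+(2^j-r)$, where the copies are separated by one or five zeros -- but that step presupposes knowing the exact pattern for $2^j-r$, which is precisely the problematic case.) Second, the quantity you propose to propagate, the surplus $|D_m|-(m-3)/2$, does \emph{not} grow: for $m=2^j-3$ and $m=3\cdot2^j-3$ one has $|D_m|=(m-1)/2$, so the surplus equals exactly $1$ for every $j$ -- your own table shows this ($2$ vs $1$, $4$ vs $3$, $6$ vs $5$, $10$ vs $9$, $22$ vs $21$). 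Hence any accounting that controls overlap and boundary effects only ``roughly'', or up to bounded errors, cannot close these two families: losing a single unit destroys the inequality, so the count must be exact.

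What is needed (and what the paper does) is a closed-form description of the mod-$2$ coefficient pattern rather than an approximate inductive surplus. For $m=2^j-1$ the coefficient of $t^\ell$ is $0$ precisely when $m-\ell\equiv 2$ modulo $3$ (checked by multiplying by $t^{-1}+1+t$ and using Frobenius), giving the exact count $\lfloor(2m-5)/3\rfloor$; for $m=2^j-3$ one uses $(t^{-1}+1+t)^m\equiv P(t^4)(t^{-1}+1+t)$ to get a pattern of period $12$ and the exact count $(m-1)/2$; for $m=3\cdot2^j-r$ the genuinely disjoint three-copy decomposition yields $(m-1)/2$, $(2m-7)/3$ or $(2m-10)/3$ according to the case. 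Each of these exceeds $(m-3)/2$, in two of the four families by exactly one. If you wish to keep an inductive framing, the object you must carry through the induction is this exact periodic pattern (equivalently, the exact value of $|D_m|$), not an asymptotic estimate of how $|D_m|$ and the support size scale.
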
 

\begin{proof}[Sketch Proof]
Let $m=2^j-1$.  For $0\le \ell\le m$ define 
$$c(\ell):=c(-\ell)=\left\{\begin{array}{ll} 0~~\mathrm{if}~~m-\ell\equiv 2~~\mathrm{mod}~~3\\ 1~~\mathrm{otherwise}   \end{array}\right.$$
and $P_j(t):=\sum_{\ell=-m}^m c(\ell)t^\ell$.    Then an easy exercise shows that
$$(P_j(t)(t^{-1}+1+t)\equiv t^{-m-1}+1+t^{m+1} \equiv (t^{-1}+1+t)^{m+1}~~\mathrm{mod}~~2.$$
Hence $P_j(t)\equiv (t^{-1}+1+t)^m$ modulo $2$.  But the coefficients of $t^\ell$ and $t^{\ell+2}$ in $P_j(t)$ have opposite parity whenever $0\le \ell\le m-3$ and $m-\ell\equiv 0$ or $1$ modulo $3$.
It is an easy exercise to check that there are 
$$\left\lfloor \frac{2m-5}3\right\rfloor$$
such integers, which is more than $(m-3)/2$ when $j\ge 3$.   By the above remarks and Lemma \ref{coeffs} this gives the desired result for $m$ of the form $2^j-1$.

\medskip
A similar analysis applies when $m=2^j-3=4\times (2^{j-2}-1)+1$.  Then $(t^{-1}+1+t)^m\equiv P_{j-2}(t^4)(t^{-1}+1+t)$ modulo $2$.
The periodicity (with period $3$) in the list of coefficients of non-negative powers of $t$ in $P_{j-1}(t)$ 
(modulo $2$) translates to periodicity of period $12$ in the coefficients of non-negative powers of $t$ in $(t^{-1}+1+t)^m$ (modulo $2$).  Within any given period, the coefficients of  $t^\ell$ and $t^{\ell+2}$  have opposite parity for precisely $6$ values of $\ell$. From this observation and the behaviour of the sequence of coefficients near $\ell=0$ and $\ell=m$, we can calculate that the coefficients of  $t^\ell$ and $t^{\ell+2}$  have opposite parity for precisely $(m-1)/2$ integers $0\le \ell\le m-3$.  Again the result follows from Lemma \ref{coeffs}.

\medskip
The analysis in the remaining cases is again similar.  Since $3\cdot 2^j=2^{j+1}+2^j$ we deduce that
$$(t^{-1}+1+t)^m=(t^{-2^{j+1}}+1+t^{2^{j+1}})(t^{-1}+1+t)^n$$
where $n=2^j-1$ or $n=2^j-3$ respectively.  Thus the list of coefficients of $(t^{-1}+1+t)^m$ modulo $2$ consists of three copies of the corresponding list for $(t^{-1}+1+t)^n$ modulo $2$ -- consecutive copies being separated by one or five zeroes respectively.  This allows us to calculate the number of integers $0\le \ell\le m-3$ for which the coefficients of $t^\ell$ and $t^{\ell+2}$ have opposite parity, which turns out to be $(m-1)/2$ when $m=3\cdot 2^j-3$, and $(2m-7)/3$ (resp. $(2m-10)/3$) when $m=3\cdot 2^j-1$ with $j$ odd (resp. even). Once again Lemma \ref{coeffs} applies to give the result.
\end{proof}

\begin{cor}\label{thirtythree}
Conjecture \ref{noW} holds for odd $m\le 15$.  Hence the Rosenberger Conjecture holds for generalised triangle groups of type $(2,4,2)$ with length parameter $k\le 33$.
\end{cor}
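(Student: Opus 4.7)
The plan is to verify Conjecture \ref{noW} for each odd $m$ in $\{5,7,9,11,13,15\}$, and then combine with the earlier results of the paper to deduce the Rosenberger Conjecture for $k\le 33$.

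Of the six values, each of $5=2^3-3$, $7=2^3-1$, $9=3\cdot 2^2-3$, $13=2^4-3$ and $15=2^4-1$ falls under one of the forms treated in the preceding lemma, so the conclusion is immediate for these. The only remaining odd value is $m=11=3\cdot 2^2-1$, which is precisely the boundary case $j=2$ excluded from that lemma: here the count $(2m-10)/3=4$ \emph{equals} $(m-3)/2=4$, so the strict-inequality argument just fails.

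To handle $m=11$ I would exploit the fact that Lemma \ref{coeffs}'s bound is saturated. Because the opposite-parity indices $\ell\in\{1,2,4,5\}$ already account for all four matches $a(k)=a(k+2)$ permitted by Lemma \ref{coeffs}, each of the ``same-parity'' indices $\ell\in\{0,3,6,7,8\}$ must contribute none, forcing the rigidities $a(0)\ne a(2)$, $a(3)\ne a(5)$, $a(6)\ne a(8)$, $a(8)\ne a(10)$, and $a(7)\ne a(9)$ (together with the corresponding condition $a(-7)\ne a(-9)$) in the balanced form \eqref{balanced}. Combined with the parity relations from Lemma \ref{Laurent} -- the odd exponents of $(t^{-1}+1+t)^{11}$ modulo $2$ being $\{0,\pm 2,\pm 3,\pm 5,\pm 6,\pm 8,\pm 10,\pm 11\}$ -- the boundary condition $a(\pm 11)=1$, the symmetries $y\mapsto y^{-1}$ and $W\mapsto W^{-1}$, and the congruence $exp_y(W)\equiv\pm 3\pmod 8$ from Lemma \ref{modeight}, only a handful of candidate words $W$ remain. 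For each of these I would compute $\tau_W$ directly and verify that it differs from $\sqrt 2(\lambda^2-1)^{11}$; the most economical check is the coefficient of $\lambda^{16}$, which for a genuine counterexample would have to equal $-\binom{11}{3}\sqrt 2=-165\sqrt 2$.

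The second statement then follows by combination. If $k\le 33$ is odd, Theorems \ref{oddoddthm} and \ref{oddeventhm} already rule out counterexamples regardless of the parity of $exp_y(W)$. If $k=2m$ is even, Theorems \ref{eveneventhm} and \ref{twotwosthm} reduce us to the even-odd subcase of this section, in which Corollary \ref{tauWform} forces $\tau_W=\sqrt 2(\lambda^2-1)^m$. When $m$ is odd with $m\le 15$ the first part of this corollary applies; when $m$ is even, iterated application of Lemma \ref{div2} reduces either to an odd $m\le 15$ already dealt with, or to a short word of length parameter $k\le 8$, which is covered by the $k\le 4$ case of \cite{Ros,LR} and by Lemma \ref{W123}. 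The main obstacle is the $m=11$ case: since the preceding lemma's bound is tight, there is no room for a counting contradiction, and one must combine several of the paper's other constraints with a finite case-check to complete the argument.
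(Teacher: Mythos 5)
Your reduction to the single value $m=11$ (noting that $5,7,9,13,15$ are covered by the preceding lemma), and your derivation of the $k\le 33$ statement from Theorems \ref{eveneventhm}, \ref{oddoddthm}, \ref{twotwosthm}, \ref{oddeventhm}, Lemma \ref{div2} and the short-word analysis, agree with the paper. The problem is the case $m=11$ itself, which is exactly where work is needed, and there your argument has a genuine gap. A small point first: what follows from Lemma \ref{coeffs} and Lemma \ref{Laurent} is that exactly one match $a(\ell)=a(\ell+2)$ occurs in each mirrored pair $\{-7,5\},\{-6,4\},\{-4,2\},\{-3,1\}$ (and in fact this much is automatic from Lemma \ref{Laurent}), not that the four matches sit at $\ell\in\{1,2,4,5\}$; your rigidities at the same-parity indices do still follow. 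The real issue is the final step: the constraints you list -- Lemma \ref{Laurent}, the inequalities $a(0)\ne a(2)$, $a(3)\ne a(5)$, $a(6)\ne a(8)$, $a(8)\ne a(10)$, $a(7)\ne a(9)$, the condition $a(\pm11)=1$, and the congruence $\mathrm{exp}_y(W)\equiv\pm3\pmod 8$ from Lemma \ref{modeight} -- leave not ``a handful'' of words but a family of $2^5$ of them (those with $a(0)=a(6)$, i.e.\ exponent-sum $43$ or $45$; the remaining letters $a(1),a(3),a(4),a(7)$ and $a(0)$ are free), roughly $8$--$16$ classes after the symmetries. You then defer to an unexecuted computation of $\tau_W$ for each, with no argument that the check (e.g.\ of the $\lambda^{16}$ coefficient) actually fails; so the crucial case is not proved, and if the verification were done by machine the corollary would become a computer-assisted result, which the paper deliberately avoids here.

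The missing idea is the \emph{first} part of Lemma \ref{coeffs}, which you never invoke: exactly $m-1=10$ adjacent pairs satisfy $a(\ell)=a(\ell+1)$, and comparing this count with the mod-$2$ coefficient pattern of $(t^{-1}+1+t)^{11}$ shows that precisely one of the pairs of equalities $a(-6)=a(-5),\,a(5)=a(6)$ or $a(-3)=a(-2),\,a(2)=a(3)$ holds. Combined with $a(3)\ne a(5)$ this forces $a(2)=a(6)$, and the parity relations then pin $\mathrm{exp}_y(W)$ to $41$ or $47$; since $2k=44\equiv4\pmod 8$, Lemma \ref{modeight} is contradicted and no candidate word survives -- with no enumeration or trace computation at all. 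Note that this acts in the opposite direction to your filter: the exponent-sums $41,47$ are precisely the ones Lemma \ref{modeight} excludes, whereas your constraints leave the words of exponent-sum $43$ or $45$ untouched. To repair your proof you must either add the adjacent-pair count and run the paper's argument, or actually carry out and record the finite verification you describe.
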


\begin{proof}[Sketch Proof]
Every odd number between $5$ and $15$ satisfies one of the conditions of the lemma, except for $11$.

Suppose that $m=11$, and that $\tau_W(\lambda)=\sqrt2(\lambda^2-1)^{11}$. Write $W$ in the form \eqref{balanced}.  Then there are precisely $(m-3)/2=4$ integers $\ell$ between $1-m$ and $m-3$ for which $a(\ell)=a(\ell+2)$, by Lemma \ref{coeffs}.  By Lemma \ref{Laurent} these must be four of the eight values of $\ell$ for which the coefficients of $t^\ell$ and $t^{\ell+2}$ in $(t^{-1}+1+t)^{11}$ have opposite parity, namely:
\begin{itemize}
\item one of $-7,5$;
\item one of $-6,4$;
\item one of $-4,2$; and
\item one of $-3,1$.
\end{itemize} 
In particular $a(-5)\ne a(-3)=a(3)\ne a(5)$. 

In a similar way, one can consider which $m-1=10$ of the integers $\ell$ between $1-m$ and $m-2$ satisfy $a(\ell)=a(\ell+1)$, in the light of the coefficients of $(t^{-1}+1+t)^{11}$.  We can deduce that precisely one of the following pairs of equalities hold:
\begin{itemize}
\item $a(-6)=a(-5)$ and $a(5)=a(6)$;
\item $a(-3)=a(-2)$ and $a(2)=a(3)$.
\end{itemize}
Since $a(3)\ne a(5)$, it follows that $a(2)=a(6)$.  Hence we have
$$a(-10)\ne a(-8)\ne a(-6)=a(-2)\ne a(0)\ne a(2)=a(6)\ne a(8)\ne a(10).$$

 In combination with the facts that  $a(\pm5)\ne a(\pm3)$, $a(\pm11)=1$, and $a(-j)\ne a(j)$ for $j=1,4,7,9$, this tells us that the exponent-sum of $y$ in $W$ is either $41$ (if $a(0)=3$) or $47$ (if $a(0)=1$).
But this, together with the fact that $2k=44\equiv 4$ modulo $8$, contradicts Lemma \ref{modeight}.
\end{proof}

\section{Computer searches}\label{it}

The results of \S\S\,\ref{charvar} - \ref{short} confirm the Rosenberger Conjecture for generalised triangle groups of type $(2,4,2)$ in the three sub-cases where the exponent-sums of $x,y$ in $W$ are odd-odd, odd-even or even-even.  Those of \S \ref{evenodd} confirm it in the remaining sub-case under a bound on the length parameter -- see Corollary \ref{thirtythree}.  While computational methods were used in experimental investigations leading to these results, their final proofs can be written down and confirmed by purely theoretical means.

If one is willing to accept more widespread use of computational methods as part of a proof, then one can extend the bounds on Corollary \ref{thirtythree} by using computer searches to eliminate candidate words as possible counterexamples.  
 The amount by which the bounds can be improved is restricted by the fact that any search algorithm is at least exponential-time in the length of words being analysed.   On the other hand, the searches can be streamlined by judicious use of the results in \S \ref{evenodd}.   For example, a na\"{\i}ve search for words of length $k=2m$ with trace polynomial $\sqrt2(\lambda^2-1)^m$ would list all $2^{2m-1}$ words of the form \eqref{balanced} (which one can reduce by a factor of $4$ to $2^{m-3}$ using the group-isomorphisms induced by the moves $W\leftrightarrow W^{-1}$ and $y\leftrightarrow y^{-1}$).  One would then calculate for each word the trace polynomial and compare it to the target polynomial $\sqrt2(\lambda^2-1)^m$.

However, Lemma \ref{Laurent} tells us how to derive almost half of the letters of our candidate word from the other half.  So this reduces the number of words to be considered from $2^{2m-1}$ to $2^m$ (or $2^{m-2}$ using the group-isomorphism moves as before).  This extends by approximately a factor of $2$ the word-length that can be searched in a given time period.

A further streamlining of the search algorithm is provided by Lemmas \ref{modeight} and \ref{coeffs}: counting exponent-sums, and the numbers of $j$ for which $a(j)=a(j+2)$ or $a(j)=a(j+1)$, is much more computationally efficient than computing the trace polynomial.

Using the above tricks, we were able to confirm the following by computer searches on a basic laptop using a standard issue of GAP \cite{GAP} within a period of a few hours for any given length parameter.

\begin{clem}
Conjecture \ref{noW} holds for odd $m\le 31$.
\end{clem}

\begin{ccor}
The Rosenberger Conjecture holds for generalised triangle groups of type $(2,4,2)$ with length parameter $k\le 65$.
\end{ccor}

One method of showing that a group has free subgroups is to use small-cancellation theory.   In particular, it is not difficult to show that, if the square of a word $W$ of the form \eqref{balanced} satisfies the small cancellation condition C6, then it has non-abelian  free subgroups -- for example that generated by $(xy)^N$ and $(xy^3)^N$ for sufficiently large $N$.  However, neither $yxy^2$ nor $y^3xy^2$ is a piece in $W^2$, so the failure of C6 entails the existence of a piece of length at least $2m-1$.  Indeed, a more precise analysis shows that -- possibly after inverting $W$ -- the subword
$U:=xy^{a(2-m)}x\cdots xy^{a(-1)}x$ of syllable length $2m-3$ is the initial part of a subword $V$ of syllable length $2m-3+j$, say, which is either
\begin{itemize}
\item equal to its own inverse in $\Z_2*\Z_4$; or
\item periodic of period $j>0$.
\end{itemize}
In the second case, if $U$ is the initial part of a subword longer than $V$ that is periodic of period $j$, then if follows from Lemma \ref{Laurent} that the subsequence $c(-j),c(1-j),\dots,c(j-1),c(j)$ of subscripts of the coefficients $c(j)$ of $t^j$ in $(t+1+t^{-1})^m$ modulo $2$ is periodic of period $j$.  An analysis of the sequence $\{c(j)\}$ leads to a contradiction for $j>1$.  And periodicity of period $1$ is ruled out by Lemma \ref{coeffs}.  So $U$ is of maximal length. The complementary subword $U':=xy^{a(1)}x\cdots xy^{a(m-2)}x$ cannot also be the last part of a periodic subword, for then the whole word $\widehat{U}:=xy^{a(1-m)}x\cdots xy^{a(m-1)}x$ is periodic - again leading to a contradiction.  Thus inverting $W$ again reverts us to the first case.

If $U$ is the start of a subword of syllable length  $2m-3+2j$ which is equal to its own inverse, then combining this fact with Lemma \ref{Laurent} we can show that $j$ is even and $W$ is determined (up to the usual equivalence) by $(m-1-j)/2$ of its letters.   So now our search space is further reduced from $2^{m-2}$ candidate words to $$\sum_{i=0}^{(m-3)/2}2^i=2^{(m-1)/2}-1.$$

Using these remarks, we were able to optimise our computer search and further increase the length bound for the Rosenberger Conjecture.

\begin{clem}
The Rosenberger Conjecture holds for generalised triangle groups of type $(2,4,2)$ with length parameter $k\le 97$.
\end{clem} 

(Note however that this last result says nothing about Conjecture \ref{noW} -- a counterexample to that conjecture which satisfied the C6 property would not be found by this search.)

\end{document}